
\documentclass[final,leqno]{siamltex}

% definitions used by included articles, reproduced here for 
% educational benefit, and to minimize alterations needed to be made
% in developing this sample file.
\usepackage{amsmath}
\usepackage{amssymb}
\usepackage{graphicx}
\usepackage{color}
\usepackage{ifpdf}

\usepackage{tikz}
\usetikzlibrary{calc}
\newcommand{\lake}[3]{
\draw[fill=blue!20] #2 +(-1.75,0.5)-- +(-1,-1) -- +(1,-1) -- +(1.75,0.5);
\draw[line width=0.75pt] #2 +(-2,1)-- +(-1,-1) -- +(1,-1) -- +(2,1);
\node(#1) at #2 {#3};
\coordinate(#1_a) at ($#2+(0,0.5)$);
\coordinate(#1_b) at ($#2+(0,-1)$);
\coordinate(#1_br) at ($#2+(1,-1)$);
\coordinate(#1_bl) at ($#2+(-1,-1)$);
\coordinate(#1_l) at ($#2+(-1.5,0)$);
\coordinate(#1_r) at ($#2+(1.5,0)$);
}
\newcommand{\uduct}[3]{
\draw[line width=3pt,->](#1)--(#2);
\draw[fill=white, line width=0.75pt] ($0.5*(#1)+0.5*(#2)$) circle (0.6);
\node at ($0.5*(#1)+0.5*(#2)$) {#3};
}
\newcommand{\dduct}[3]{
\draw[line width=3pt,<->](#1)--(#2);
\draw[fill=white, line width=0.75pt] ($0.5*(#1)+0.5*(#2)$) circle (0.6);
\node at ($0.5*(#1)+0.5*(#2)$) {#3};
}

\newcommand{\dam}[3]{
\coordinate(#1_cc) at ($(#2)-(0.25,0.5)$);
\draw[fill=white, line width=0.75pt] (#1_cc) +(-1,0.75)-- +(-0.5,0.875) -- +(1.5,0.875) -- +(1.5,0) -- +(-1,0) -- cycle;
\draw[fill=white, line width=0.75pt] (#1_cc) +(-1,0.75)-- +(1,0.75) -- +(1,-0.75) -- +(-1,-0.75) -- cycle;
\draw[fill=white, line width=0.75pt] (#1_cc) +(1,0.75)-- +(1.5,0.875) -- +(1.5,-0.625) -- +(1,-0.75) -- cycle;
\node at ($(#1_cc)+(0,0.25)$) {#3};
\coordinate(#1_out) at ($(#1_cc)-(0,0.5)$);
}

\newcommand{\reach}[3]{
\draw[fill=blue!20, line width=0.75pt] ($(#2)+(0.5,-0.25)$) -- ($(#2)+(0.5,0.25)$) -- 
($(#2)+(-0.5,0.25)$) -- ($(#2)+(-0.5,0.25)+#3*(-1,-0.25)$) -- 
($(#2)+(-0.5,-0.25)+#3*(-1,-0.25)$) -- ($(#2)+(0.5,-0.25)+#3*(-1,-0.25)$) -- cycle;
\draw[line width=0.75pt] ($(#2)+(-0.5,0.25)+#3*(-1,-0.25)$) -- ($(#2)+(0.5,0.25)+#3*(-1,-0.25)$) -- ($(#2)+(0.5,0.25)$);
\draw[line width=0.75pt] ($(#2)+(0.5,0.25)+#3*(-1,-0.25)$) -- ($(#2)+(0.5,-0.25)+#3*(-1,-0.25)$);
\coordinate(#1_out) at ($(#2)+#3*(-1,-0.25)$);
\coordinate(#1_ma) at ($(#2)+0.5*#3*(-1,-0.25)+(-0.5,0.25)$);
\coordinate(#1_mb) at ($(#2)+0.5*#3*(-1,-0.25)+(0.5,-0.25)$);
}

\newcommand{\arrow}[2]{
\draw[line width=3pt,->](#1)--(#2);
}

\def\norm#1{\left\|#1\right\|}

%+ Renewcommand symbols
\newcommand{\R}{\mathbb{R}}
\newcommand{\abs}[1]{\left\vert#1\right\vert}
\newcommand{\set}[1]{\left\{#1\right\}}
\newtheorem{remark}{Remark}
\newtheorem{algorithm}{\noindent\normalfont\textsc{Algorithm}}
\newtheorem{assumption}{\textbf{A}\!\!}

\newtheorem{assumptionm}{\textbf{A}\!\!}

\newcommand{\aref}[1]{\textbf{\textsc{A\ref{#1}}}}
\newenvironment{example}[1][{\normalfont\textit{Example.\theproposition.}}]
{\begin{trivlist}\item[\hskip \labelsep {\bfseries #1}]}{\hfill{\Large$\diamond$}\end{trivlist}}

% some definitions of bold math italics to make typing easier.
% They are used in the corollary.

\title{ADJOINT-BASED PREDICTOR-CORRECTOR SEQUENTIAL CONVEX PROGRAMMING FOR PARAMETRIC NONLINEAR OPTIMIZATION}

%  \thanks{This 
%         work was supported by the Society for Industrial and
%         Applied Mathematics, Philadelphia, Pennsylvania.}}

% The thanks line in the title should be filled in if there is
% any support acknowledgement for the overall work to be included
% This \thanks is also used for the received by date info, but
% authors are not expected to provide this.
\author{Tran D. Quoc$^{*}$, Carlo Savorgnan$^{*}$ \and %, Boris Houska$^{*}$\and
Moritz Diehl\thanks{Department of Electrical Engineering (ESAT-SCD) and
Optimization in Engineering Center (OPTEC), K.U. Leuven, Kasteelpark Arenberg
10, B-3001 Leuven, Belgium ({\tt \{quoc.trandinh, carlo.savorgnan, %boris.houska,
moritz.diehl\}@esat.kuleuven.be})}}

\begin{document}
\maketitle

\begin{abstract}
This paper proposes an algorithmic framework for solving parametric optimization problems which we call adjoint-based predictor-corrector sequential convex
programming. After presenting the algorithm, we prove a contraction estimate that guarantees the tracking performance of the algorithm. 
Two variants of this algorithm are investigated. The first one can be used to solve nonlinear programming problems while the second variant is aimed to treat
online parametric nonlinear programming problems. The local convergence of these variants is proved. 
An application to a large-scale benchmark problem that originates from nonlinear model predictive control of a hydro power plant is implemented to examine the
performance of the algorithms.
\end{abstract}

\begin{keywords} 
Predictor-corrector path-following, sequential convex programming, adjoint method, parametric nonlinear programming, online optimization. 
\end{keywords}

\begin{AMS}
49J52, 49M37, 65F22, 65K05, 90C26, 90C30, 90C55
\end{AMS}

\pagestyle{myheadings}
\thispagestyle{plain}
\markboth{D.Q. TRAN, C. SAVORGNAN AND M. DIEHL}{ADJOINT-BASED PREDICTOR-CORRECTOR SCP}

%%%%%%%%%%%%%%%%%%%%%%%%%%%%%%%%%%%%%%%%%%%%%%%%%%%%%%%%%%%%%%%%%%%%%%%%%%%%%%%%%%%%%%%%%%%%%%%%%%%%%%%%%%%%%%
%+1. Introduction
\section{Introduction}\label{sec:intro}
In this paper, we consider a parametric nonconvex optimization problem of the form:
\begin{equation}\label{eq:param_prob}
\makeatletter
\def\tagform@#1{\maketag@@@{#1\@@italiccorr}}
\makeatother
\left\{\begin{array}{cl}
\displaystyle\min_{x\in\R^n} & f(x) \\
\textrm{s.t.}  & g(x) + M\xi  = 0,\\
& x\in\Omega,
\end{array}\right.\tag{$\mathrm{P}(\xi)$}
\end{equation}
where $f : \R^n\to \R$ is convex, $g:\R^n\to\R^m$ is nonlinear, $\Omega\subseteq\R^n$ is a nonempty, closed convex set, and the parameter
$\xi$ belongs to a given subset $\mathcal{P}\subseteq\R^p$.
Matrix $M\in\R^{m\times p}$ plays the role of embedding the parameter $\xi$ into the equality constraints in a linear way.
Throughout this paper, $f$ and $g$ are assumed to be differentiable on their domain. 
Problem \ref{eq:param_prob} includes many (parametric) nonlinear programming problems such as standard nonlinear programs, nonlinear second
order cone programs, nonlinear semidefinite programs \cite{Kanzow2005,Nocedal2006,Stingl2007}.
The theory of parametric optimization has been extensively studied in many research papers and monographs, see,
e.g. \cite{Bonnans1994,Gauvin1988,Robinson1980}.

This paper deals with the efficient calculation of approximate solutions to a sequence of problems of the form \ref{eq:param_prob}, where
the parameter $\xi$ is slowly varying.
In other words, for a sequence $\{\xi_k\}_{k\geq 0}$ such that $\norm{\xi_{k+1}-\xi_{k}}$ is small, we want to solve the
problems $\mathrm{P}(\xi_k)$ in an efficient way without requiring more accuracy than needed in the result.

In practice, sequences  of problems of the form \ref{eq:param_prob} arise in the framework of real-time optimization, moving horizon
estimation, online data assimilation as well as in nonlinear model predictive control (NMPC).
A practical obstacle in these applications is the time limitation imposed on solving the underlying optimization problem for each value of the
parameter.
Instead of solving completely a nonlinear program at each sample time \cite{Biegler2000,Biegler1991, Bock2000,Helbig1998}, 
several online algorithms approximately solve the underlining nonlinear optimization problem by performing the first iteration of exact
Newton, sequential quadratic programming (SQP), Gauss-Newton or interior point methods \cite{Diehl2002b,Ohtsuka2004,Zavala2010}. 
In \cite{Diehl2002b,Ohtsuka2004} the authors only consider the algorithms in the framework of SQP method. This approach has been proved to be efficient in
practice and is widely used in many applications \cite{Diehl2002}. 
Recently, Zavala and Anitescu \cite{Zavala2010} proposed an inexact Newton-type method for solving online optimization problems based on
the framework of generalized equations \cite{Bonnans1994,Robinson1980}.

Other related work considers practical problems which possess more general convexity structure such as second order cone and semidefinite
cone constraints, nonsmooth convexity \cite{Fares2002,Stingl2007}. In these applications, standard optimization methods may not perform
satisfactorily.
Many algorithms for nonlinear second order cone and nonlinear semidefinite programming have recently been proposed and found many
applications in robust optimal control, experimental design, and topology optimization, see, e.g.
\cite{Bauer2000,Fares2002,Freund2007,Kato2007,Stingl2007}. These approaches can be considered as generalization of the SQP method. 

%+ 1.1. Motivation.
\subsection{Contribution}\label{sec:motivation}
The contribution of this paper is twofold. 
We start our paper by proposing a generic framework for the \textit{adjoint-based predictor-corrector sequential convex programming}
(APCSCP) for parametric optimization and prove a main result of the stability of tracking error for this algorithm (Theorem \ref{th:contraction_estimate}). 
In the second part the theory is specialized to the non-parametric case where a single optimization problem is solved.
The local convergence of these variants is also proved.
Finally, we present a numerical application to large scale nonlinear model predictive control of a hydro power plant with $259$ state variables and
$10$ controls. The performance of our algorithms is  compared with a standard real-time Gauss-Newton method and a conventional model
predictive control (MPC) approach.

APCSCP is based on three main ideas: sequential convex programming, predictor-corrector path-following and adjoint-based optimization. We briefly
explain these methods in the following. 

% 1.2. SCP algorithm.
\subsection{Sequential convex programming}\label{subsec:SCPS}
The sequential convex programming (SCP) method is a local nonconvex optimization technique. SCP solves a sequence of convex approximations of
the original problem by convexifying only the nonconvex parts and preserving the structures that can efficiently be exploited by convex
optimization techniques \cite{Boyd2004,Mattingley2010,Nesterov2004}. 
Note that this method is different from SQP methods where quadratic programs are used as approximations of the problem.
This approach is useful when the problem possesses general convex structures such as conic constraints, a cost function depending on matrix
variables or convex constraints resulting from a low level problem in multi-level settings \cite{Bauer2000,Diehl2006c,Stingl2007}. Due to the
complexity of these structures, standard optimization techniques such as SQP and Gauss-Newton-type methods may not be convenient to apply. 
In the context of nonlinear conic programming, SCP approaches have been proposed under the names sequential semidefinite programming (SSDP) or SQP-type methods
\cite{Correa2002,Fares2002,Freund2007,Kanzow2005,Kato2007,Stingl2007}.
It has been shown in \cite{Diehl2006} that the superlinear convergence is lost if the linear semidefinite programming subproblems in the
SSDP algorithm are convexified. In \cite{Lewis2008} the authors considered a nonlinear program in the framework of a composite minimization
problem, where the inner function is linearized to obtain a convex subproblem which is made strongly convex by adding a quadratic proximal term.

In this paper, following the work in \cite{Fares2002,Freund2003,Quoc2010,Quoc2009}, we apply the SCP approach to solve problem \ref{eq:param_prob}.
The nonconvex constraint $g(x) + M\xi = 0$ is linearized at each iteration to obtain a convex approximation. The resulting subproblems can
be solved by exploiting convex optimization techniques.

%+ 1.3. Real-time iteration. 
\subsection{Predictor-corrector path-following methods}\label{subsec:PC_method}
In order to illustrate the idea of the predictor-corrector path-following method \cite{Deuflhard2004,Zavala2010}, we consider the case
$\Omega \equiv \mathbb{R}^n$. The KKT system of problem \ref{eq:param_prob} can be written as $F(z; \xi) = 0$, where $ z =
(x,{y})$ is its primal-dual variable. 
The solution $z^{*}(\xi)$ that satisfies the KKT condition for a given $\xi$ is in general a smooth map. 
By applying the implicit function theorem, the derivative of $z^{*}(\cdot)$ is expressed as
\begin{equation*}
\frac{\partial{z}^{*}}{\partial{\xi}}(\xi) = - \left[\frac{\partial{F}}{\partial{z}}(z^{*}(\xi); \xi)\right]^{-1}\frac{\partial{F}}{\partial{\xi}}(z^{*}(\xi); \xi).
\end{equation*}
In the parametric optimization context, we might have solved a problem with parameter $\bar{\xi}$ with solution $\bar{z} = z^{*}(\bar{\xi})$ and want to solve the next problem for a new parameter
$\hat{\xi}$. The tangential predictor $\hat{z}$ for this new solution $z^{*}(\hat{\xi})$ is given by
\begin{equation*}
\hat{z} = z^{*}(\bar{\xi}) + \frac{\partial{z}^{*}}{\partial\xi}(\bar{\xi})(\hat{\xi} - \bar{\xi}) = z^{*}(\bar{\xi}) -
\left[\frac{\partial{F}}{\partial{z}}(z^{*}(\bar\xi);
\bar\xi)\right]^{-1}\frac{\partial{F}}{\partial{\bar\xi}}(z^{*}(\bar\xi); \bar\xi)(\hat{\xi} -\bar{\xi}).    
\end{equation*}
Note the similarity with one step of a Newton method. In fact, a combination of the tangential predictor and the corrector due to a Newton method proves to be useful in the case that $\bar{z}$ was not
the exact solution of $F(z; \bar\xi) = 0$, but only an approximation.
In this case, linearization at $(\bar{z}, \bar{\xi})$ yields a formula that one step of a \textit{predictor-corrector path-following method} needs to satisfy:
\begin{equation}\label{eq:predi-corr}
F(\bar{z}; \bar\xi) + \frac{\partial{F}}{\partial\xi}(\bar{z};\bar{\xi})(\hat{\xi}-\bar{\xi}) + \frac{\partial{F}}{\partial z}(\bar{z};\bar{\xi})(\hat{z}-\bar{z}) = 0. 
\end{equation}
Written explicitly, it delivers the solution guess $\hat{z}$ for the next parameter $\hat{\xi}$ as
\begin{equation*}
\hat{z} = \bar{z} \underbrace{
-\left[\frac{\partial{F}}{\partial{z}}(\bar{z};\bar\xi)\right]^{-1}\frac{\partial{F}}{\partial{\xi}}(\bar{z};\bar{\xi})
(\hat{\xi}-\bar{\xi})}_{=\Delta z_{\mathrm{predictor}}}
\underbrace{-\left[\frac{\partial{F}}{\partial{z}}(\bar{z};\bar\xi)\right]^{-1}F(\bar{z};\bar{\xi})}_{=\Delta z_{\mathrm{corrector}}} 
\end{equation*}
Note that when the parameter enters linearly into $F$, we can write 
\begin{equation*}
\frac{\partial{F}}{\partial{\xi}}(\bar{z}; \bar{\xi})(\hat{\xi}-\bar{\xi})  = F(\bar{z}; \hat{\xi}) - F(\bar{z};\bar{\xi}). 
\end{equation*}
Thus, equation~\eqref{eq:predi-corr} is reduced to
\begin{equation} \label{eq:smoothpredictor}
F(\bar{z}; \hat{\xi}) + \frac{\partial{F}}{\partial{z}}(\bar{z})(\hat{z}-\bar{z}) = 0.
\end{equation}
It follows that  the predictor-corrector step can be easily obtained by just applying one standard Newton step to the new problem $\mathrm{P}(\hat{\xi})$
initialized at the past solution guess $\bar{z}$, if we employed the parameter embedding in the problem formulation \cite{Diehl2002}.

Based on the above analysis, the predictor-corrector path-following method only performs the first iteration of the exact Newton method for each new problem.
In this paper, by applying the generalized equation framework \cite{Robinson1980,Rockafellar1997}, we generalize this idea to the case where
more general convex constraints are considered.
When the parameter does not enter linearly into the problem, we can always reformulate this problem as \ref{eq:param_prob} by using slack variables.
In this case, the derivatives with respect to these slack variables contain the information of the predictor term.
Finally, we notice that the real-time iteration scheme proposed in \cite{Diehl2002b} can be considered as a variant of the
above predictor-corrector method in the SQP context.

%+ 1.4. Adjoint-based method
\subsection{Adjoint-based method}\label{subsec:adjoint}
From a practical point of view, most of the time spent on solving optimization problems resulting from simulation-based methods is needed to
evaluate the functions and their derivatives \cite{Bock1984}.
Adjoint-based methods rely on  the observation that it is not necessary to use exact Jacobian matrices of the constraints.
Moreover, in some applications, the time needed to evaluate all the derivatives of the functions exceeds the time available to compute the solution of the
optimization problem.
The adjoint-based Newton-type methods in \cite{Diehl2010,Griewank2002,Schlenkrich2010} can work with an inexact Jacobian matrix and only require an
exact evaluation of the Lagrange gradient using adjoint derivatives to form the approximate optimization subproblems in the algorithm. 
This technique still allows to converge to the exact solutions but can save valuable time in the online performance of the algorithm.
  
%+ 1.5. Tutorial example.
\subsection{A tutorial example}\label{subsec:example}
The idea of the APCSCP method is illustrated in the following simple example.
\begin{example}\label{ex:tutorial_example}(\textit{Tutorial example})
Let us consider a simple nonconvex parametric optimization problem:
\begin{equation}\label{eq:tutorial_example}
\min\left\{ -x_1 ~|~ x^2_1 + 2x_2 + 2 -4\xi = 0, ~x^2_1 - x^2_2 + 1 \leq 0, ~x \geq 0, ~x \in\mathbb{R}^2 \right\},
\end{equation}
where $\xi\in \mathcal{P} := \{\xi\in\mathbb{R} ~:~ \xi \geq 1.2 \}$ is a parameter.
After few calculations, we can show that $x^{*}_{\xi} = (2\sqrt{\xi -\sqrt{\xi}}, 2\sqrt{\xi} - 1)^T$ is a stationary point
of problem \eqref{eq:tutorial_example} which is also the uniquely global optimum. It is clear that problem \eqref{eq:tutorial_example} satisfies the
\textit{strong second order sufficient condition} (SSOSC) at $x^{*}_{\xi}$.

Note that the constraint $x^2_1 - x^2_2 + 1 \leq 0$ is convex and it can be written as a second order cone constraint $\norm{(x_1, 1)^T}_2 \leq x_2$. Let
us define $g(x) := x_1^2 + 2x_2 + 2$, $M := -4$ and $\Omega := \{x\in\mathbb{R}^2 ~|~ \norm{(x_1, 1)^T}_2 \leq x_2, ~ x\geq 0 \}$. Then, problem
\eqref{eq:tutorial_example} can be casted into the form of \ref{eq:param_prob}.

\begin{figure}[ht]
\centerline{\includegraphics[angle=0,height=3.6cm,width=13.0cm]{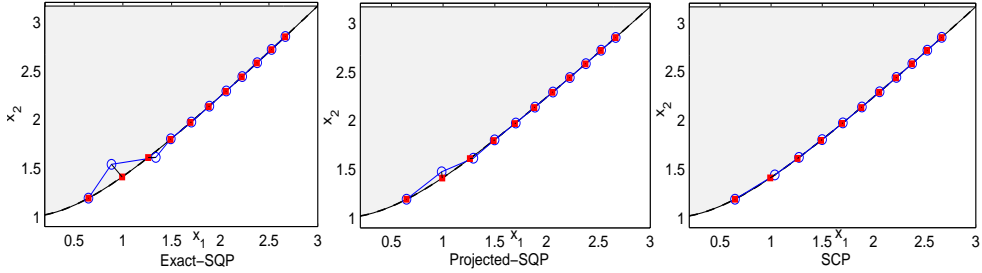}}
\caption{The trajectory of three methods $(k=0,\cdots, 9)$, $(${\color{red}$\diamond$} is $x^{*}(\xi_k)$ and {\color{blue}$\circ$} is $x^k$ $)$.}
\label{fig:exam_traject}
\end{figure}

\begin{figure}[ht]
\centerline{\includegraphics[angle=0,height=2.5cm,width=13.0cm]{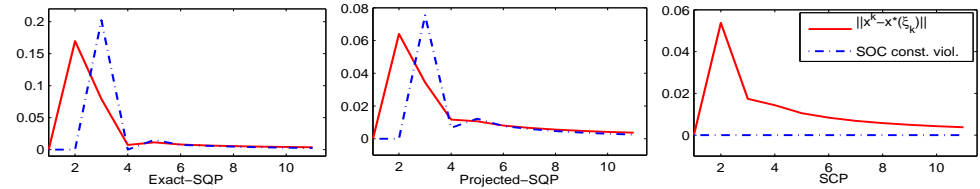}}
\caption{The tracking error and the cone constraint violation of three methods $(k=0,\cdots, 9)$.}
\label{fig:exam_res}
\end{figure}
 
The aim is to approximately solve problem \eqref{eq:tutorial_example} at each given value $\xi_k$ of the parameter $\xi$.
Instead of solving the nonlinear optimization problem at each $\xi_k$ until complete convergence, APCSCP only performs the first step of the
SCP  algorithm to obtain an approximate solution $x^k$ at $\xi_k$. Notice that the convex subproblem needed to be solved at each $\xi_k$ in the APCSCP method is
\begin{equation}\label{eq:sub_prob_exam}
\min_{x}\left\{ -x_1 ~|~ 2x_1^kx_1 + 2x_2 - (x_1^k)^2 + 2 - 4\xi = 0, ~\norm{(x_1, 1)^T} \leq x_2, ~ x\geq 0 \right\}.
\end{equation}
We compare this method with other known real-time iteration algorithms. The
first one is the real-time iteration with an exact SQP method and the second algorithm is the real-time iteration with an SQP method using a projected Hessian
\cite{Diehl2002b,Jarre2003}. 
In the second algorithm, the Hessian matrix of the Lagrange function is projected onto the cone of symmetric positive semidefinite matrices to obtain a convex
quadratic programming  subproblem.

Figures \ref{fig:exam_traject} and \ref{fig:exam_res} illustrate the performance of three methods when $\xi_k = 1.2 + k\Delta\xi_k$ for $k=0, \dots, 9$ and
$\Delta\xi_k = 0.25$. The initial point $x^0$ of three methods is chosen at the true solution of $\mathrm{P}(\xi_0)$.
We can see that the performance of the exact SQP and the SQP using projected Hessian is quite similar. However, the second order cone constraint $\norm{(x_1,
1)^T}_2\leq x_2$ is violated in both methods.  
The SCP method preserves the feasibility and better follows the exact solution trajectory. Note that the subproblem in the exact SQP method is a nonconvex
quadratic program, a convex QP in the projected SQP case and a second order cone constrained program \eqref{eq:sub_prob_exam} in the SCP method.
\end{example}
   
%+ 2.4. Notation.
\subsection{Notation}\label{subsec:notation}
Throughout this paper, we use the notation $\nabla f$ for the gradient vector of a scalar function $f$, $g'$ for the Jacobian matrix of a
vector valued function $g$ and $\mathcal{S}^n$ (resp., $\mathcal{S}^n_{+}$
and $\mathcal{S}^n_{++}$) for the set of $n\times n$ real symmetric (resp., positive semidefinite and positive definite) matrices.
The notation $\norm{\cdot}$ stands for the Euclidean norm.
The ball $\mathcal{B}(x,r)$ of radius $r$ centered at $x$ is defined as $\mathcal{B}(x,r) := \{y\in\mathbb{R}^n~|~ \norm{y-x} < r\}$ and
$\bar{\mathcal{B}}(x,r)$ is its closure.

%+ 2.5. Organizing of the paper.
The rest of this paper is organized as follows. 
Section \ref{sec:adjSCPalg} presents a generic framework of the \textit{adjoint-based predictor-corrector SCP algorithm} (APCSCP).
Section \ref{sec:contraction_estimate} proves the local contraction estimate for APCSCP and the stability of the approximation error. 
Section \ref{sec:SCP_case} considers an \textit{adjoint-based SCP algorithm} for solving nonlinear programming problems as a special case. 
The last section presents computational results for an application of the proposed algorithms in nonlinear model predictive control (NMPC) of
a hydro power plant. 

%%%%%%%%%%%%%%%%%%%%%%%%%%%%%%%%%%%%%%%%%%%%%%%%%%%%%%%%%%%%%%%%%%%%%%%%%%%%%%%%%%%%%%%%%%%%%%%%%%%%%%%%%%%%%%
%+ 2. A generic Adjoint Based Sequential Convex Programming algorithm.
\section{An adjoint-based predictor-corrector SCP algorithm}\label{sec:adjSCPalg}
In this section, we present a generic algorithmic framework for solving the parametric optimization problem \ref{eq:param_prob}.
Traditionally, at each sample $\xi_k$ of parameter $\xi$, a nonlinear program $\mathrm{P}(\xi_k)$ is solved to get a completely converged
solution $\bar{z}(\xi_k)$. Exploiting the real-time iteration idea \cite{Diehl2002,Diehl2002b}, in our algorithm below, only one convex
subproblem is solved to get an approximated solution $z^k$ at $\xi_k$ to $\bar{z}(\xi_k)$. 
   
Suppose that $z^k := (x^k, {y}^k) \in\Omega\times \R^m$ is a given KKT point of $\mathrm{P}(\xi_k)$ (more details can be found in the next
section), $A_k$ is a given $m\times n$ matrix and $H_k\in\mathcal{S}^n_{+}$. We consider the following parametric optimization subproblem: 
\begin{equation}\label{eq:convex_subprob}
\makeatletter
\def\tagform@#1{\maketag@@@{#1\@@italiccorr}}
\makeatother
\left\{\begin{array}{cl}
\displaystyle
\min_{x\in\R^n} &\left\{ f(x) + (m^k)^T(x-x^k) + \frac{1}{2}(x-x^k)^TH_k(x-x^k) \right\}\\
\textrm{s.t.} &A_k(x-x^k) + g(x^k) + M\xi = 0, \\
       &x \in \Omega,
\end{array}\right.\tag{$\mathrm{P}(z^k,A_k,H_k;\xi)$}
\end{equation}
where $m^k := m(z^k, A_k) = \left(g'(x^k)-A_k\right)^Ty^k$. Matrix $A_k$ is an approximation to  $g'(x^k)$ at $x^k$, $H_k$ is a regularization or an
approximation to $\nabla^2_x\mathcal{L}(\bar{z}^k)$, where $\mathcal{L}$ is the Lagrange function of \ref{eq:param_prob} to be defined in Section
\ref{sec:contraction_estimate}. Vector $m^k$ can be considered as a correction term of the inconsistency between $A_k$ and $g'(x^k)$. 
Vector $y^k$ is referred to as the Lagrange multiplier.
Since $f$ and $\Omega$ are convex and $H_k$ is symmetric positive semidefinite, the subproblem \ref{eq:convex_subprob} is convex. Here, $z^k$,
$A_k$ and $H_k$ are considered as parameters.

%+ Remark 1.
\begin{remark}\label{re:adjoint}
Note that computing the term $g'(x^k)^Ty^k$ of the correction vector $m^k$ does not require the whole Jacobian matrix $g'(x^k)$, which is
usually time consuming to evaluate. This adjoint directional derivative can be cheaply evaluated  by using  adjoint
methods \cite{Griewank2000}.
\end{remark}

The adjoint-based predictor-corrector SCP algorithmic framework is described as follows.

%%%%%%%%%%%%%%%%%%%%%%%%%%%%%%%%%%%%%%%%%%%%%%%%%%%%%%%%%%%%%%%%%%%%%%%%%
\noindent\rule[1pt]{\textwidth}{1.0pt}{~~}
\begin{algorithm}\label{alg:A1} 
$\mathrm{(}${\textit{Adjoint-based predictor-corrector SCP algorithm}}~$\mathrm{(APCSCP)})$.
\end{algorithm}
\vskip -0.15cm
\noindent\rule[1pt]{\textwidth}{0.5pt}
\vskip -0.15cm
\begin{romannum}
\item[\textbf{Initialization.}] For a given parameter $\xi_0\in\mathcal{P}$,  
solve approximately (off-line) $\mathrm{P}(\xi_0)$ to get an approximate KKT point $z^0:=(x^0,{y}^0)$.
Compute $g(x^0)$, find a matrix $A_0$ which approximates $g'(x^0)$ and $H_0\in\mathcal{S}^n_{+}$. Then, compute vector
$m^0:=\left(g'(x^0)-A_0\right)^T{y}^0$. 
Set $k:=0$.
\item[\textbf{Iteration $k$ ($k=0,1,\dots $})] For a given $(z^k, A_k, H_k)$, perform the three steps below:
\begin{remunerate}
\item[\textit{Step 1. }] Get a new parameter value $\xi_{k+1} \in\mathcal{P}$.
\item[\textit{Step 2. }] Solve the convex subproblem $\mathrm{P}(z^k,A_k, H_k;\xi_{k+1})$ to obtain a solution $x^{k+1}$ and the
corresponding multiplier $y^{k+1}$. 
\item[\textit{Step 3. }] Evaluate
$g(x^{k+1})$, update (or recompute) matrices $A_{k+1}$ and $H_{k+1}\in \mathcal{S}^n_{+}$. Compute vector $m^{k+1} :=
g'(x^{k+1})^Ty^{k+1} -
A_{k+1}^Ty^{k+1}$.
Set $k := k+1$ and go back to Step 1.
\end{remunerate}
\end{romannum}
\noindent\rule[1pt]{\textwidth}{1.0pt}
%%%%%%%%%%%%%%%%%%%%%%%%%%%%%%%%%%%%%%%%%%%%%%%%%%%%%%%%%%%%%%%%%%%%%%%%%

The core step of Algorithm \ref{alg:A1} is to solve the convex subproblem \ref{eq:convex_subprob} at each
iteration. To reduce the computational time, we can either implement an optimization method which exploits the structure of the problem or
rely on several efficient software tools that are available for convex optimization \cite{Boyd2004,Nesterov2004,Nocedal2006}.
In this paper, we are most interested in the case where one evaluation of $g'$ is very expensive.
A possibly simple choice of $H_k$ is $H_k = 0$ for all $k\geq 0$. 

The initial point $z^0$ is obtained by solving off-line $\mathrm{P}(\xi_0)$.
However, as we will show later [Corollary \ref{co:tracking_corollary}], if we choose $z^0$ close to the set of KKT points $Z^{*}(\xi_0)$ of
$\mathrm{P}(\xi_0)$ (not necessarily an exact solution) then the new KKT point $z^1$ of $\mathrm{P}(z^0, A_0, H_0;\xi^1)$ is still close to
$Z^{*}(\xi_1)$ of $\mathrm{P}(\xi_1)$ provided that $\norm{\xi_1-\xi_0}$ is sufficiently small. Hence, in practice, we only need to solve
approximately problem $\mathrm{P}(\xi_0)$ to get a starting point $z^0$. 

In the NMPC framework, the parameter $\xi$ usually coincides with the initial state of the dynamic system at the current time of the moving
horizon. If matrix $A_k\equiv g'(x^k)$, the exact Jacobian matrix of $g$ at $x^k$ and $H_k \equiv 0$, then this algorithm collapses to the 
\textit{real-time SCP
method} (RTSCP) considered in \cite{Quoc2009}. 

%%%%%%%%%%%%%%%%%%%%%%%%%%%%%%%%%%%%%%%%%%%%%%%%%%%%%%%%%%%%%%%%%%%%%%%%%%%%%%%
%+ 4. Approximate solution of multi-parametric optimization problem
\section{Contraction estimate}\label{sec:contraction_estimate}
In this section, we will show that under certain assumptions, the sequence $\{z^k\}_{k\geq 0}$ generated by Algorithm \ref{alg:A1} remains close to
the sequence of the true KKT points $\{\bar{z}_k\}_{k\geq 0}$ of problem $\mathrm{P}(\xi_k)$.
Without loss of generality, we assume that the objective function $f$ is linear, i.e. $f(x) = c^Tx$, where $c\in\mathbb{R}^n$ is
given. Indeed, since $f$ is convex, by using a slack variable $s$, we can reformulate \ref{eq:param_prob} as a nonlinear
program $\min_{(x,s)}\big\{s ~|~ g(x) + M\xi = 0, ~x\in \Omega, ~f(x) \leq s\big\}$.
 
%+ 4.1. KKT condition as a generalized equation.
\subsection{KKT condition as a generalized equation}
Let us first define the Lagrange function of problem $\mathrm{P}(\xi)$ as
\begin{equation*}
\mathcal{L}(x,{y};\xi) := c^Tx + (g(x) + M\xi)^Ty,  
\end{equation*}
where $y$ is the Lagrange multiplier associated with the constraint $g(x) + M\xi=0$.
Since the constraint $x\in\Omega$ is convex and  implicitly represented, we will consider it separately.
The KKT condition for $\mathrm{P}(\xi)$ is now written as
\begin{equation}\label{eq:P_kkt}
\begin{cases}
0 \in c + g'(x)^Ty + \mathcal{N}_{\Omega}(x),\\
0 = g(x) + M\xi,
\end{cases}
\end{equation}
where $\mathcal{N}_{\Omega}(x)$ is the normal cone of $\Omega$ at $x$ defined as
\begin{equation}\label{eq:P_normal_cone}
\mathcal{N}_{\Omega}(x) := \begin{cases}
\left\{ u\in\R^n ~|~ u^T(x-v) \geq 0, ~v\in\Omega\right\}, ~~\text{if}~~ x\in\Omega\\
\emptyset, ~~\text{otherwise}.
\end{cases} 
\end{equation}
Note that the first line of \eqref{eq:P_kkt} implicitly includes the constraint $x\in\Omega$.

A pair $(\bar{x}(\xi),\bar{y}(\xi))$ satisfying \eqref{eq:P_kkt} is called a KKT point of $\mathrm{P}(\xi)$ and $\bar{x}(\xi)$ is called a
stationary point of $\mathrm{P}(\xi)$ with the corresponding multiplier $\bar{y}(\xi)$. Let us denote by $Z^{*}(\xi)$ and $X^{*}(\xi)$ the
set of KKT points and the set of stationary points of $\mathrm{P}(\xi)$, respectively.
In the sequel, we use the letter $z$ for the pair of $(x,y)$, i.e. $z := (x^T, y^T)^T$. 

Throughout this paper, we require the following assumptions which are standard in optimization.
%+ Assumption A1.
\begin{assumption}\label{as:A1}
The function $g$ is twice differentiable on their domain.
\end{assumption}

%+ Assumption A2.
\begin{assumption}\label{as:A2}
For a given $\xi_0\in\mathcal{P}$, problem $\mathrm{P}(\xi_0)$ has at least one KKT point $\bar{z}^0$, i.e. $Z^{*}(\xi_0)\neq\emptyset$.
\end{assumption}

Let us define  
\begin{equation}\label{eq:P_varphi}
F(z) := \begin{pmatrix}c + g'(x)^Ty\\ g(x)\end{pmatrix}, 
\end{equation}
and $K := \Omega\times\R^m$.
Then, the KKT condition \eqref{eq:P_kkt} can be expressed in terms of a parametric generalized equation as follows:
\begin{equation}\label{eq:P_gen_eq}
0 \in F(z) + C\xi + \mathcal{N}_K(z), 
\end{equation}
where $C := \big[\begin{smallmatrix}0\\ M\end{smallmatrix}\big]$.
Generalized equations are an essential tool to study many problems in nonlinear analysis, perturbation analysis, variational calculations as
well as optimization \cite{Bonnans2000,Klatte2002,Rockafellar1997}.

Suppose that, for some $\xi_k\in\mathcal{P}$, the set of KKT points $Z^{*}(\xi_k)$ of $\mathrm{P}(\xi_k)$ is nonempty. 
For any fixed $\bar{z}^k\in Z^{*}(\xi_k)$, we define the following set-valued mapping:
\begin{equation}\label{eq:P_linearization}
L(z;\bar{z}^k, \xi_k) := F(\bar{z}^k) + F'(\bar{z}^k)(z-\bar{z}^k) + C\xi_k + \mathcal{N}_K(z). 
\end{equation}
We also define the inverse mapping $L^{-1} : \R^{n+m}\to \R^{n+m}$ of $L(\cdot;\bar{z}^k,\xi_k)$ as follows:
\begin{equation}\label{eq:P_Tinverse}
L^{-1}(\delta;\bar{z}^k,\xi_k) := \left\{ z\in\R^{n+m} ~:~ \delta \in L(z;\bar{z}^k, \xi_k)\right\}. 
\end{equation}

Now, we consider the KKT condition of the subproblem \ref{eq:convex_subprob}.
For given neighborhoods $\mathcal{B}(\bar{z}^k, r_z)$ of $\bar{z}^k$ and $\mathcal{B}(\xi_k, r_{\xi})$ of $\xi_k$, and
 $z^k\in\mathcal{B}(\bar{z}^k, r_z)$, $\xi_{k+1}\in \mathcal{B}(\xi_k, r_{\xi})$ and given matrices $A_k$ and $H_k\in\mathcal{S}^n_{+}$, let
us consider the convex subproblem $\mathrm{P}(z^k,A_k, H_k;\xi_{k+1})$ with respect to the parameter $(z^k, A_k, H_k, \xi_{k+1})$. 
The KKT condition of this problem is expressed as follows. 
\begin{equation}\label{eq:kkt_subprob}
\begin{cases}
0 \in c + m(z^k,A_k) + H_k(x-x^k) + A_k^T{y} + \mathcal{N}_{\Omega}(x),\\
0 = g(x^k) + A_k(x-x^k) + M\xi_{k+1},
\end{cases} 
\end{equation}
where $\mathcal{N}_{\Omega}(x)$ is defined by \eqref{eq:P_normal_cone}. 
Suppose that the Slater constraint qualification holds for the subproblem $\mathrm{P}(z^k,A_k, H_k;\xi_{k+1})$, i.e.:
 \begin{equation*}
\text{ri}(\Omega)\cap\set{x\in\R^n ~|~  g(x^k) + A_k(x- x^k) + M\xi_{k+1} = 0}\neq \emptyset,
\end{equation*}
where $\text{ri}(\Omega)$ is the relative interior of $\Omega$.
Then by convexity of $\Omega$, a point $z^{k+1} := (x^{k+1}, y^{k+1})$ is a KKT point of $\mathrm{P}(z^k,A_k, H_k;\xi_{k+1})$ if and only if
$x^{k+1}$ is a solution to $\mathrm{P}(z^k,A_k, H_k;\xi_{k+1})$ associated with the multiplier ${y}^{k+1}$.

Since $g$ is twice differentiable by Assumption \aref{as:A1} and $f$ is linear, for a given $z = (x, y)$, we have 
\begin{equation}\label{eq:P_E11_term}
\nabla^2_x\mathcal{L}(z) = \sum_{i=1}^my_i\nabla^2 g_i(x), 
\end{equation}
the Hessian matrix of the Lagrange function $\mathcal{L}$, where $\nabla^2 g_i(\cdot)$ is the Hessian matrix of $g_i$ ($i=1,\dots, m$).
Let us define the following matrix:
\begin{equation}\label{eq:H_matrix}
\tilde{F}'_k := \begin{bmatrix}H_k & A_k^T \\ A_k & 0\end{bmatrix}, 
\end{equation}
where $H_k \in \mathcal{S}^n_{+}$.
The KKT condition \eqref{eq:kkt_subprob} can be written as a parametric linear generalized equation:
\begin{equation}\label{eq:gen_eq_for_subprob}
0 \in F(z^k) + \tilde{F}'_k(z-z^k) + C\xi_{k+1} + \mathcal{N}_K(z), 
\end{equation}
where $z^k$, $\tilde{F}'_k$ and $\xi_{k+1}$ are considered as parameters. Note that if $A_k = g'(x^k)$ and $H_k =
\nabla^2_x\mathcal{L}(z^k)$ then \eqref{eq:gen_eq_for_subprob} is the linearization of the nonlinear generalized equation
\eqref{eq:P_gen_eq} at $(z^k, \xi_{k+1})$ with respect to $z$.

% Remark 4.1.
\begin{remark}\label{re:predictor_corrector}
Note that \eqref{eq:gen_eq_for_subprob} is a generalization of \eqref{eq:smoothpredictor}, where the approximate Jacobian $\tilde{F}'_k$ is used instead of the exact one. Therefore, \eqref{eq:gen_eq_for_subprob} can be viewed as one iteration of the inexact predictor-corrector path-following method for solving \eqref{eq:P_gen_eq}.
\end{remark}
 
%+ 4.2. The strong regularity concept.
\subsection{The strong regularity concept}
We recall the following definition of the \textit{strong regularity} concept. 
This definition can be considered as the strong regularity of the generalized equation \eqref{eq:P_gen_eq} in the context of nonlinear
optimization, see \cite{Robinson1980}.
 
%+ Definition 4.1.
\begin{definition}\label{de:strong_regularity}
Let $\xi_k\in\mathcal{P}$ such that the set of KKT points $Z^{*}(\xi_k)$ of $\mathrm{P}(\xi_k)$ is nonempty. Let $\bar{z}^k\in Z^{*}(\xi_k)$
be a given KKT point of $\mathrm{P}(\xi_k)$. Problem $\mathrm{P}(\xi_k)$ is said to be \textit{strongly regular} at $\bar{z}^k$ if there
exist neighborhoods $\mathcal{B}(0, \bar{r}_{\delta})$ of the origin and $\mathcal{B}(\bar{z}^k, \bar{r}_z)$ of $\bar{z}^k$ such that the
mapping $z_k^{*}(\delta) := \mathcal{B}(\bar{z}^k, \bar{r}_z) \cap L^{-1}(\delta;\bar{z}^k, \xi_k)$ is single-valued and Lipschitz
continuous in $\mathcal{B}(0, \bar{r}_{\delta})$ with a Lipschitz constant $0 < \gamma < +\infty$, i.e.
\begin{equation}\label{eq:P_Lipschitz}
\norm{z^{*}_k(\delta) - z^{*}_k(\delta')} \leq \gamma\norm{\delta-\delta'}, ~~\forall \delta, \delta'\in\mathcal{B}(0, \bar{r}_{\delta}). 
\end{equation}
\end{definition}
Note that the constants $\gamma$, $\bar{r}_z$ and $\bar{r}_{\delta}$ in Definition \ref{de:strong_regularity} are global and do not
depend on the index $k$.

From the definition of $L^{-1}$ where strong regularity holds, there exists a unique $z^{*}_k(\delta)$ such that $\delta \in F(\bar{z}^k) + F'(\bar{z}^k)(z^{*}_k(\delta)-\bar{z}^k) + C\xi_k + \mathcal{N}_K(z^{*}_k(\delta))$. Therefore,
\begin{eqnarray*}
z^{*}_k(\delta) ~&&= (F'(\bar{z}^k) + \mathcal{N}_K)^{-1}\left(F'(\bar{z}^k)\bar{z}^k - F(\bar{z}^k) - C\xi_k + \delta\right)\\
&& = \bar{J}_k\left(F'(\bar{z}^k)\bar{z}^k - F(\bar{z}^k) - C\xi_k + \delta\right), 
\end{eqnarray*}
where $\bar{J}_k := (F'(\bar{z}^k) + \mathcal{N}_K)^{-1}$. The strong regularity of $\mathrm{P}(\xi)$ at $\bar{z}^k$ is equivalent to the
single-valuedness and the Lipschitz continuity of $\bar{J}_k$ around $v^k := F'(\bar{z}^k)\bar{z}^k - F(\bar{z}^k) - C\xi_k$.

The strong regularity concept is widely used in variational analysis, perturbation analysis as well as in
optimization \cite{Bonnans2000,Klatte2002,Rockafellar1997}. 
In view of optimization, strong regularity implies the strong second order sufficient optimality condition (SSOSC) if the linear
independence constraint qualification (LICQ) holds \cite{Robinson1980}.
If the convex set $\Omega$ is polyhedral and the LICQ holds, then strong regularity is equivalent to SSOSC \cite{Dontchev1996}.
In order to interpret the strong regularity condition of $\mathrm{P}(\xi^k)$ at $\bar{z}^k\in Z^{*}(\xi_k)$ in terms of perturbed optimization, we consider the
following optimization problem 
\begin{equation}\label{eq:pert_cvp}
\left\{\begin{array}{cl}
\displaystyle\min_{x\in\mathbb{R}^n} &(c - \delta_c)^Tx + \frac{1}{2}(x- \bar{x}^k)^T\nabla^2_x\mathcal{L}(\bar{x}^k,\bar{y}^k)(x-\bar{x}^k) \\
\mathrm{s.t.}~ &g(\bar{x}^k) + g'(\bar{x}^k)(x-\bar{x}^k) + M\xi_k = \delta_g,\\
& x \in \Omega.
\end{array}
\right. 
\end{equation}
Here, $\delta = (\delta_c, \delta_g) \in \mathcal{B}(0, \bar{r}_{\delta})$ is a perturbation.
Problem $\mathrm{P}(\xi_k)$ is \textit{strongly regular} at $\bar{z}^k$ if and only if \eqref{eq:pert_cvp} has a unique KKT
point $z^{*}_k(\delta)$ in $\mathcal{B}(\bar{z}^k,\bar{r}_z)$ and $z^{*}_k(\cdot)$ is Lipschitz continuous in
$\mathcal{B}(0,\bar{r}_{\delta})$ with a Lipschitz constant $\gamma$.   

%+ EXAMPLE 4.1.
\vskip0.1cm
\begin{example}\label{ex:example2} 
Let us recall example \eqref{eq:tutorial_example} in Section \ref{sec:motivation}. The optimal multipliers associated with two constraints $x^2_1 + 2x_2 + 2
- 4\xi = 0$ and $x^2_1 - x_2^2 +1 \leq 0$ are $y^{*}_1 = (2\sqrt{\xi}-1)[8\sqrt{\xi^2 - \xi\sqrt{\xi}}]^{-1} > 0$ and $y^{*}_2
= [8\sqrt{\xi^2 - \xi\sqrt{\xi}}]^{-1} > 0$, respectively. Since the last inequality constraint is active while $x\geq 0$ is inactive, we can easily
compute the critical cone as  $\mathcal{C}(x^{*}_{\xi},y^{*}) = \{(d_1,0)\in\mathbb{R}^2 ~|~ x^{*}_{\xi 1}d_1 = 0\}$. The Hessian matrix
$\nabla_x^2\mathcal{L}(x^{*}_{\xi},y^{*}) =
\left[\begin{smallmatrix}2(y^{*}_1+y^{*}_2) & 0\\ 0 & -2y^{*}_2\end{smallmatrix}\right]$ of the Lagrange function $\mathcal{L}$ is positive definite in
$\mathcal{C}(x^{*}_{\xi},y^{*})$. Hence, the second order sufficient optimality condition for \eqref{ex:tutorial_example} is satisfied. Moreover, $y_2^{*} > 0$
which says that the strict complementarity condition holds. Therefore, problem \eqref{ex:tutorial_example} satisfies the the strong second order sufficient
condition.  
On the other hand, it is easy to check that the LICQ condition holds for \eqref{ex:tutorial_example} at $x^{*}_{\xi}$.
By applying \cite[Theorem 4.1]{Robinson1980}, we can conclude that \eqref{eq:tutorial_example} is strongly regular at $(x^{*}_{\xi}, y^{*})$.
\end{example}

The following lemma shows the nonemptiness of $Z^{*}(\xi)$ in the neighborhood of $\xi_k$. 

%+ Lemma 4.1.
\begin{lemma}\label{le:nonemptyness_of_Gamma}
Suppose that Assumption \aref{as:A1} is satisfied and $Z^{*}(\xi_k)$ is nonempty for a given $\xi_k\in\mathcal{P}$. 
Suppose further that problem $\mathrm{P}(\xi_k)$ is strongly regular at $\bar{z}^k$ for a given $\bar{z}^k \in Z^{*}(\xi_k)$.
Then there exist neighborhoods $\mathcal{B}(\xi_k, r_{\xi})$ of $\xi_k$ and $\mathcal{B}(\bar{z}^k, r_z)$ of $\bar{z}^k$ such that
$Z^{*}(\xi_{k+1})$ is nonempty for all $\xi_{k+1}\in\mathcal{B}(\xi_k, r_{\xi})$ and $Z^{*}(\xi_{k+1})\cap\mathcal{B}(\bar{z}^k, r_z)$
contains only one point $\bar{z}^{k+1}$. Moreover, there exists a constant $0 \leq \bar{\sigma} < +\infty$ such that:
\begin{equation}\label{eq:lm31_estimate}
\norm{\bar{z}^{k+1}-\bar{z}^k} \leq \bar{\sigma}\norm{\xi_{k+1}-\xi_k}. 
\end{equation}
\end{lemma}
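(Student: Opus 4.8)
The plan is to solve the parametric generalized equation \eqref{eq:P_gen_eq} for $\xi_{k+1}$ near $\xi_k$ by a contraction-mapping argument built on the single-valued localization $z^{*}_k(\cdot)$ of $L^{-1}(\cdot;\bar z^k,\xi_k)$ supplied by strong regularity; this is the classical Robinson implicit-function scheme transcribed into the present notation. First I would rewrite the inclusion $0\in F(z)+C\xi_{k+1}+\mathcal N_K(z)$ as a perturbation of the linearized inclusion $\delta\in L(z;\bar z^k,\xi_k)$. Adding and subtracting $F(\bar z^k)+F'(\bar z^k)(z-\bar z^k)+C\xi_k$ shows that $0\in F(z)+C\xi_{k+1}+\mathcal N_K(z)$ is equivalent to $z\in L^{-1}(\delta(z);\bar z^k,\xi_k)$, where
\[
\delta(z):=-\big[F(z)-F(\bar z^k)-F'(\bar z^k)(z-\bar z^k)\big]-C(\xi_{k+1}-\xi_k).
\]
Since $\bar z^k\in Z^{*}(\xi_k)$ we have $0\in L(\bar z^k;\bar z^k,\xi_k)$, hence $z^{*}_k(0)=\bar z^k$. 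Because $g$ is twice differentiable (Assumption \aref{as:A1}), $F'$ is continuous, so for every $\e>0$ there is $r>0$ with $\norm{F(z)-F(z')-F'(\bar z^k)(z-z')}\le\e\norm{z-z'}$ for all $z,z'\in\bar{\mathcal B}(\bar z^k,r)$; consequently $\norm{\delta(z)-\delta(z')}\le\e\norm{z-z'}$ and $\norm{\delta(z)}\le\e\norm{z-\bar z^k}+\norm{C}\,\norm{\xi_{k+1}-\xi_k}$ on this ball.

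Next I would fix $\e$ so that $\gamma\e\le\tfrac12$, shrink $r\le\bar r_z$ accordingly, and then choose $r_\xi>0$ so small that $\gamma\norm{C}r_\xi\le r/2$ and $\e r+\norm{C}r_\xi<\bar r_\delta$. The last requirement guarantees $\delta(z)\in\mathcal B(0,\bar r_\delta)$, so that $z^{*}_k(\delta(z))$ is well defined whenever $z\in\bar{\mathcal B}(\bar z^k,r)$ and $\xi_{k+1}\in\mathcal B(\xi_k,r_\xi)$. Define $\Phi(z):=z^{*}_k(\delta(z))$ on $\bar{\mathcal B}(\bar z^k,r)$. Using \eqref{eq:P_Lipschitz}, $\norm{\Phi(z)-\Phi(z')}\le\gamma\norm{\delta(z)-\delta(z')}\le\gamma\e\norm{z-z'}\le\tfrac12\norm{z-z'}$, and $\norm{\Phi(z)-\bar z^k}=\norm{z^{*}_k(\delta(z))-z^{*}_k(0)}\le\gamma\norm{\delta(z)}\le\gamma\e r+\gamma\norm{C}r_\xi\le r$, so $\Phi$ is a $\tfrac12$-contraction of $\bar{\mathcal B}(\bar z^k,r)$ into itself. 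The Banach fixed-point theorem then yields a unique $\bar z^{k+1}\in\bar{\mathcal B}(\bar z^k,r)$ with $\bar z^{k+1}=\Phi(\bar z^{k+1})$; reversing the equivalences above (valid because $\bar z^{k+1}$ lies in $\mathcal B(\bar z^k,\bar r_z)$ and $\delta(\bar z^{k+1})\in\mathcal B(0,\bar r_\delta)$) shows that $\bar z^{k+1}$ is a KKT point of $\mathrm{P}(\xi_{k+1})$, and that it is the only one in $\mathcal B(\bar z^k,r_z)$ with $r_z:=r$. This establishes nonemptiness of $Z^{*}(\xi_{k+1})$ for all $\xi_{k+1}\in\mathcal B(\xi_k,r_\xi)$ together with the claimed local uniqueness.

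Finally, the Lipschitz estimate \eqref{eq:lm31_estimate} drops out of the same inequalities: from $\bar z^{k+1}=\Phi(\bar z^{k+1})$ and $z^{*}_k(0)=\bar z^k$,
\[
\norm{\bar z^{k+1}-\bar z^k}\le\gamma\norm{\delta(\bar z^{k+1})}\le\gamma\e\norm{\bar z^{k+1}-\bar z^k}+\gamma\norm{C}\,\norm{\xi_{k+1}-\xi_k},
\]
so $(1-\gamma\e)\norm{\bar z^{k+1}-\bar z^k}\le\gamma\norm{C}\,\norm{\xi_{k+1}-\xi_k}$, and since $\gamma\e\le\tfrac12$ we get \eqref{eq:lm31_estimate} with $\bar\sigma:=2\gamma\norm{C}$. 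I expect the only delicate point to be the bookkeeping of the radii: they must be chosen in the order $\e$, then $r$, then $r_\xi$ so that simultaneously $\Phi$ contracts, maps the ball into itself, and keeps its argument $\delta(z)$ inside the domain $\mathcal B(0,\bar r_\delta)$ on which $z^{*}_k$ is guaranteed single-valued and $\gamma$-Lipschitz; once these are pinned down, the rest is a routine application of the contraction principle.
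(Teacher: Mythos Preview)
Your argument is correct and is precisely the Robinson implicit-function scheme that underlies the result; the paper's own proof takes the same route but simply cites \cite[Theorem~2.1 and formula~(2.4)]{Robinson1980} rather than spelling out the contraction-mapping construction. Your explicit constant $\bar\sigma=2\gamma\norm{C}=2\gamma\norm{M}$ is consistent with the paper's observation that $\norm{F(\bar z^k)+C\xi_k-F(\bar z^k)-C\xi_{k+1}}\le\norm{M}\norm{\xi_{k+1}-\xi_k}$.
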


%+ Proof of Lemma 4.1.
\begin{proof}
Since the KKT condition of $\mathrm{P}(\xi_k)$ is equivalent to the generalized equation \eqref{eq:P_gen_eq} with $\xi=\xi_k$. By applying
\cite[Theorem 2.1]{Robinson1980} we conclude that there exist neighborhoods $\mathcal{B}(\xi_k, r_{\xi})$ of $\xi_k$
and $\mathcal{B}(\bar{z}^k, r_z)$ of $\bar{z}^k$ such that $Z^{*}(\xi_{k+1})$ is nonempty for all $\xi_{k+1}\in\mathcal{B}(\xi_k, r_{\xi})$
and $Z^{*}(\xi_{k+1})\cap\mathcal{B}(\bar{z}^k, r_z)$ contains only one point $\bar{z}^{k+1}$.
On the other hand, since $\norm{F(\bar{z}^k) + C\xi_k - F(\bar{z}^k) - C\xi_{k+1}} = \norm{M(\xi_k-\xi_{k+1})} \leq
\norm{M}\norm{\xi_{k+1} - \xi_k}$, by using the formula \cite[2.4]{Robinson1980}, we obtain the estimate \eqref{eq:lm31_estimate}.
\end{proof}
%+ End of the proof.

%+ 4.3. The convergence of inexact Jacobian matrix.
\subsection{A contraction estimate for APCSCP using an inexact Jacobian matrix}
In order to prove a contraction estimate for APCSCP, throughout this section, we make the following assumptions.

%+ Assumption A3.
\begin{assumption}\label{as:A3} For a given $\bar{z}^k \in Z^{*}(\xi_k)$, $k\geq 0$, the following conditions are satisfied.
\begin{itemize}
\item[$\mathrm{a)}$] There exists a constant $0 \leq \kappa < \frac{1}{2\gamma}$ such that:
\begin{equation}\label{eq:A3_condition}
\norm{F'(\bar{z}^k) - \tilde{F}'_k} \leq \kappa,
\end{equation}
where $\tilde{F}_k'$ is defined by \eqref{eq:H_matrix}.

\item[$\mathrm{b)}$] The Jacobian mapping $F'(\cdot)$ is Lipschitz continuous on $\mathcal{B}(\bar{z}^k, r_z)$ around $\bar{z}^k$, i.e.
there exists a
constant $0 \leq  \omega < +\infty$ such that:
\begin{equation}\label{eq:A4_condition}
\norm{F'(z) - F'(\bar{z}^k)} \leq \omega\norm{z-\bar{z}^k}, ~\forall  z\in \mathcal{B}(\bar{z}^k, r_z).
\end{equation}
\end{itemize}
\end{assumption}

Note that Assumption \aref{as:A3} is commonly used in the theory of Newton-type and Gauss-Newton methods \cite{Deuflhard2004,Diehl2005},
where the residual term is required to be sufficiently small in a neighborhood of the local solution. 
From the definition of $\tilde{F}_k'$ we have
\begin{equation*}
F'(\bar{z}^k) - \tilde{F}_k' = \begin{bmatrix} \nabla_x^2\mathcal{L}(\bar{z}^k) - H_k &
g'(\bar{x}^k)^T - A_k^T\\
g'(\bar{x}^k) - A_k & O \end{bmatrix}. 
\end{equation*}
Hence, $\norm{F'(\bar{z}^k) - \tilde{F}_k'}$ depends on the norms of  $\nabla_x^2\mathcal{L}(\bar{z}^k) - H_k$ and $g'(\bar{x}^k) - A_k$.
These quantities are the error of the approximations $H_k$ and $A_k$ to the Hessian matrix $\nabla_x^2\mathcal{L}(\bar{z}^k)$ and the
Jacobian matrix $g'(\bar{x}^k)$, respectively.
On the one hand, Assumption \aref{as:A3}a) requires the positive definiteness of $H_k$ to be an approximation of $\nabla^2_x\mathcal{L}$ (which is not
necessarily positive definite).
On the other hand, it requires that matrix $A_k$ is a sufficiently good approximation to the Jacobian matrix $g'$ in the neighborhood of the
stationary point $\bar{x}^k$. Note that the matrix $H_k$ in the Newton-type method proposed in \cite{Bonnans1994} is not necessarily positive definite.

Now, let us define the following mapping:
\begin{equation}\label{eq:inverse_operator}
J_k := (\tilde{F}'_k + \mathcal{N}_K)^{-1}, 
\end{equation}
where $\tilde{F}'_k$ is defined by \eqref{eq:H_matrix}.
The lemma below shows that $J_k$ is single-valued and Lipschitz continuous in a neighbourhood of $\bar{v}^k := \tilde{F}'_k\bar{z}^k - F(\bar{z}^k) - C\xi_k$.

%+ Lemma 4.2.
\begin{lemma}\label{le:inverse_operator}
Suppose that Assumptions \aref{as:A1}, \aref{as:A2} and \aref{as:A3}$\mathrm{a)}$ are satisfied. 
Then there exist neighborhoods $\mathcal{B}(\xi_k, r_{\xi})$ and $\mathcal{B}(\bar{z}^k, r_z)$ such that if we take
any $z^k\in\mathcal{B}(\bar{z}^k, r_z)$ and $\xi_{k+1}\in\mathcal{B}(\xi_k, r_{\xi})$ then the mapping $J_k$ defined
by \eqref{eq:inverse_operator} is single-valued in a neighbourhood $\mathcal{B}(\bar{v}^k, r_v)$, where $\bar{v}^k := \tilde{F}'_k\bar{z}^k
- F(\bar{z}^k) - C\xi_k$.
Moreover, the following inequality holds:
\begin{equation}\label{eq:lm32_estimate}
\norm{J_k(v) - J_k(v')} \leq \beta\norm{v-v'}, ~\forall v, v' \in\mathcal{B}(\bar{v}_k, r_v), 
\end{equation}
where $\beta := \frac{\gamma}{1 - \gamma\kappa} > 0$ is a Lipschitz constant.
\end{lemma}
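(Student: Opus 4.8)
The plan is to reduce the claimed single-valuedness and Lipschitz continuity of $J_k = (\tilde{F}'_k + \mathcal{N}_K)^{-1}$ near $\bar{v}^k$ to the strong regularity hypothesis via a perturbation argument, treating $\tilde{F}'_k$ as a perturbation of the reference linear operator $F'(\bar{z}^k)$. Observe that by definition $\bar{J}_k = (F'(\bar{z}^k) + \mathcal{N}_K)^{-1}$ is single-valued and $\gamma$-Lipschitz on a neighbourhood of $v^k = F'(\bar{z}^k)\bar{z}^k - F(\bar{z}^k) - C\xi_k$; this is precisely the reformulation of strong regularity recorded just before Lemma~\ref{le:nonemptyness_of_Gamma}. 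The difference $\tilde{F}'_k - F'(\bar{z}^k)$ has operator norm at most $\kappa < \tfrac{1}{2\gamma}$ by Assumption~\aref{as:A3}a), so $\gamma\kappa < \tfrac12 < 1$ and the Banach-type perturbation lemma for strongly regular generalized equations applies, yielding that $J_k$ is single-valued and Lipschitz with constant $\beta = \gamma/(1-\gamma\kappa)$ on a neighbourhood of the perturbed reference point $\bar{v}^k = \tilde{F}'_k\bar{z}^k - F(\bar{z}^k) - C\xi_k$.

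Concretely, the key steps are as follows. First, fix $v \in \mathcal{B}(\bar{v}^k, r_v)$ for a radius $r_v$ to be chosen, and rewrite the inclusion $z \in J_k(v)$, i.e. $v \in \tilde{F}'_k z + \mathcal{N}_K(z)$, as $v + (F'(\bar{z}^k) - \tilde{F}'_k)z \in F'(\bar{z}^k)z + \mathcal{N}_K(z)$, hence as the fixed-point equation $z = \bar{J}_k\big(v + (F'(\bar{z}^k) - \tilde{F}'_k)z\big) =: \Phi_v(z)$. Second, show that for $r_v$ and the ambient radius $r_z$ small enough, $\Phi_v$ maps a closed ball $\bar{\mathcal{B}}(z^{*}_k(\text{something}), \rho)$ into itself: this uses the Lipschitz bound $\norm{\bar{J}_k(w) - \bar{J}_k(w')} \le \gamma\norm{w-w'}$ together with $\norm{(F'(\bar{z}^k) - \tilde{F}'_k)(z - z')} \le \kappa\norm{z-z'}$. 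Third, show $\Phi_v$ is a contraction with modulus $\gamma\kappa < 1$, again directly from these two bounds; the Banach fixed point theorem then gives single-valuedness of $J_k$ on $\mathcal{B}(\bar{v}^k, r_v)$. Fourth, for two points $v, v'$, subtract the fixed-point identities $z = \Phi_v(z)$, $z' = \Phi_{v'}(z')$ and estimate
\[
\norm{z - z'} \le \gamma\norm{v - v'} + \gamma\kappa\norm{z - z'},
\]
whence $\norm{J_k(v) - J_k(v')} = \norm{z-z'} \le \frac{\gamma}{1-\gamma\kappa}\norm{v-v'} = \beta\norm{v-v'}$, which is \eqref{eq:lm32_estimate}.

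A bookkeeping point that needs care is the choice of neighbourhoods: the reference point $\bar{v}^k$ for $J_k$ differs from the reference point $v^k$ for $\bar{J}_k$ by $(\tilde{F}'_k - F'(\bar{z}^k))\bar{z}^k$, which is bounded by $\kappa\norm{\bar{z}^k}$ but does not shrink with $r_z$; so one must verify that the iterates $\Phi_v(z)$ stay inside the domain $\mathcal{B}(v^k, \bar{r}_\delta)$ on which $\bar{J}_k$ is known to be single-valued and $\gamma$-Lipschitz. This is handled by first shrinking $r_v$ (and invoking that $\bar{v}^k$ itself must lie within the validity region — which is where strong regularity at $\bar{z}^k$, evaluated with perturbation $\delta = (F'(\bar{z}^k) - \tilde{F}'_k)z$ as in the perturbed problem \eqref{eq:pert_cvp}, is used), and then noting that on the corresponding ball of $z$'s the perturbation $(F'(\bar{z}^k) - \tilde{F}'_k)z$ stays in $\mathcal{B}(0, \bar{r}_\delta)$. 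Since $Z^{*}(\xi_{k+1})\cap\mathcal{B}(\bar{z}^k, r_z)$ is a singleton by Lemma~\ref{le:nonemptyness_of_Gamma} and $\bar{z}^{k}$ and $\bar{z}^{k+1}$ are within $\bar\sigma\norm{\xi_{k+1}-\xi_k}$ of each other, these radii can be chosen uniformly in $k$ thanks to the global constants $\gamma, \bar{r}_z, \bar{r}_\delta$. The main obstacle is precisely this uniform neighbourhood bookkeeping — ensuring the fixed-point map stays in the region where the strong-regularity Lipschitz estimate is valid — rather than the contraction estimate itself, which is a one-line consequence of $\gamma\kappa < 1$.
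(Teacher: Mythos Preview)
Your approach is essentially the same as the paper's: both rewrite the inclusion $v \in \tilde{F}'_k z + \mathcal{N}_K(z)$ in terms of the strongly regular linearization $L(\cdot;\bar z^k,\xi_k)$ and exploit $\gamma\kappa<1$, arriving at the identical Lipschitz estimate $\norm{z-z'}\le\gamma\norm{v-v'}+\gamma\kappa\norm{z-z'}$. The only cosmetic difference is that the paper establishes single-valuedness by contradiction (two preimages would satisfy $\norm{z-z'}\le\gamma\kappa\norm{z-z'}$), whereas you frame it as a Banach fixed-point argument for $\Phi_v$; your neighbourhood worry about the shift $(\tilde F'_k-F'(\bar z^k))\bar z^k$ disappears once you center the perturbation at $z-\bar z^k$ as the paper does, since then $\norm{\delta}\le r_v+\kappa\norm{z-\bar z^k}$ can be made smaller than $\bar r_\delta$ directly.
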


%+ Proof Lemma 4.2.
\begin{proof}
Let us fix a neighbourhood $\mathcal{B}(\bar{v}^k, r_v)$ of $\bar{v}^k$.
Suppose for contradiction that $J_k$ is not single-valued in $\mathcal{B}(\bar{v}^k, r_v)$, then for a given $v$ the set $J_k(v)$ contains at
least two points $z$ and $z'$ such that  $\norm{z-z'} \neq 0$. We have 
\begin{equation}\label{eq:s2_proof_lm32_1}
v \in \tilde{F}'_kz + \mathcal{N}_K(z) ~\mathrm{and} ~ v \in \tilde{F}'_kz' + \mathcal{N}_K(z'). 
\end{equation}
Let
\begin{eqnarray}\label{eq:s2_proof_lm32_2}
&&\delta := v - [\tilde{F}'_k\bar{z}^k -  F(\bar{z}^k) - C\xi_k] + [F'(\bar{z}^k) - \tilde{F}'_k](z-\bar{z}^k), \nonumber\\
[-1.5ex]\textrm{and}~~&&\\[-1.5ex]
&&\delta' := v -  [\tilde{F}'_k\bar{z}^k - F(\bar{z}^k) - C\xi_k] + [F'(\bar{z}^k) - \tilde{F}'_k](z'-\bar{z}^k). \nonumber 
\end{eqnarray}
Then \eqref{eq:s2_proof_lm32_1} can be written as
\begin{eqnarray}\label{eq:s2_proof_lm32_3}
&&\delta \in F(\bar{z}^k) + F'(\bar{z}^k)(z-\bar{z}^k) + C\xi_k + \mathcal{N}_K(z), \nonumber\\
[-1.5ex]\textrm{and}~~\\[-1.5ex]
&&\delta' \in F(\bar{z}^k) + F'(\bar{z}^k)(z'-\bar{z}^k) + C\xi_k + \mathcal{N}_K(z'). \nonumber
\end{eqnarray}
Since $v$ in the neighbourhood $\mathcal{B}(\bar{v}^k, r_v)$ of $\bar{v}^k := \tilde{F}'_k\bar{z}^k -  F(\bar{z}^k) - C\xi_k$, we have
\begin{eqnarray}\label{eq:s2_proof_lm32_4}
\norm{\delta} && \leq \norm{v - \bar{v}^k} + \norm{[F'(\bar{z}^k) - \tilde{F}'_k](z-\bar{z}^k)} \nonumber\\
&& \leq r_v + \norm{F'(\bar{z}^k) - \tilde{F}'_k}\norm{z-\bar{z}^k} \nonumber\\
&& \overset{\tiny\eqref{eq:A3_condition}}{\leq} r_v + \kappa\norm{z-\bar{z}^k}. \nonumber 
\end{eqnarray}
From this inequality, we see that we can shrink $\mathcal{B}(\bar{z}^k, r_z)$ and $\mathcal{B}(\bar{v}^k, r_v)$ sufficiently small (if 
necessary) such that $\norm{\delta} \leq \bar{r}_{\delta}$. Hence, $\delta \in \mathcal{B}(0, \bar{r}_{\delta})$.
Similarly, $\delta' \in \mathcal{B}(0, \bar{r}_{\delta})$.

Now, using the strong regularity assumption of $\mathrm{P}(\xi_k)$ at $\bar{z}^k$, it follows from \eqref{eq:s2_proof_lm32_3} that
\begin{eqnarray}\label{eq:s2_proof_lm32_5}
&&\norm{z - z'} \leq \gamma\norm{\delta - \delta'}. 
\end{eqnarray}
However, using \eqref{eq:s2_proof_lm32_2}, we have 
\begin{eqnarray*}
\norm{\delta - \delta'} &&= \norm{[F'(\bar{z}^k) - \tilde{F}'_k](z-z')} \nonumber\\
&& \leq  \norm{F'(\bar{z}^k)-\tilde{F}'_k}\norm{z-z'} \nonumber\\
&&\overset{\tiny\eqref{eq:A3_condition}}{\leq} \kappa\norm{z-z'}. 
\end{eqnarray*}
Plugging this inequality into \eqref{eq:s2_proof_lm32_5} and then using the condition $\gamma\kappa < \frac{1}{2} < 1$, we get
\begin{equation*}
\norm{z - z'}  < \norm{z-z'}, 
\end{equation*}
which contradicts to $z\neq z'$. Hence, $J_k$ is single-valued.

Finally, we prove the Lipschitz continuity of $J_k$. 
Let $z = J_k(v)$ and $z' = J_k(v')$, where $v, v' \in \mathcal{B}(\bar{v}^k, r_v)$. Similar to \eqref{eq:s2_proof_lm32_3}, these expressions
can be written equivalently to
\begin{eqnarray}\label{eq:s2_proof_lm32_6}
&&\delta \in F(\bar{z}^k) + F'(\bar{z}^k)(z-\bar{z}^k) + C\xi_k + \mathcal{N}_K(z), \nonumber\\
[-1.5ex]\mathrm{and}~~\\[-1.5ex]
&&\delta' \in F(\bar{z}^k) + F'(\bar{z}^k)(z' - \bar{z}^k) + C\xi_k + \mathcal{N}_K(z'), \nonumber
\end{eqnarray}
where
\begin{eqnarray}\label{eq:s2_proof_lm32_7}
&&\delta := v - [\tilde{F}'_k\bar{z}^k -  F(\bar{z}^k) - C\xi_k] + [F'(\bar{z}^k) - \tilde{F}'_k](z-\bar{z}^k), \nonumber\\
[-1.5ex]\mathrm{and}~~\\[-1.5ex]
&&\delta' := v' -  [\tilde{F}'_k\bar{z}^k - F(\bar{z}^k) - C\xi_k] + [F'(\bar{z}^k) - \tilde{F}'_k](z' - \bar{z}^k).
\nonumber  
\end{eqnarray}
By using again the strong regularity assumption, it follows from \eqref{eq:s2_proof_lm32_6} and \eqref{eq:s2_proof_lm32_7} that
\begin{eqnarray}\label{eq:s2_proof_lm32_8}
\norm{z - z'} &&\leq \gamma\norm{\delta - \delta'} \nonumber\\
&& \leq \gamma\norm{v-v'} + \gamma\norm{[F'(\bar{z}^k) - \tilde{F}'_k](z-z')} \nonumber\\
&& \overset{\tiny\eqref{eq:A3_condition}}{\leq} \gamma\norm{v-v'} + \gamma\kappa\norm{z-z'}. \nonumber 
\end{eqnarray}
Since $\gamma\kappa < \frac{1}{2} < 1$, rearranging the last inequality we get
\begin{equation*}
\norm{z-z'} \leq \frac{\gamma}{1-\gamma\kappa}\norm{v-v'}, 
\end{equation*}
which shows that $J_k$ satisfies \eqref{eq:lm32_estimate} with a constant $\beta := \frac{\gamma}{1-\gamma\kappa} > 0$. 
\end{proof}
%+ End of the proof.

Let us recall that if $z^{k+1}$ is a KKT of the convex subproblem $\mathrm{P}(z^k, A_k, H_k;\xi_{k+1})$ then
\begin{equation*}
0 \in \tilde{F}'_k(z^{k+1} - z^k) + F(z^k) + C\xi_{k+1} + \mathcal{N}_K(z^{k+1}).
\end{equation*}
According to Lemma \ref{le:inverse_operator}, if $z^k \in \mathcal{B}(\bar{z}^k, r_z)$ then problem \ref{eq:convex_subprob} is uniquely 
solvable. We can write its KKT condition equivalently as 
\begin{equation}\label{eq:zkplus1}
z^{k+1} = J_k\left(\tilde{F}'_kz^k - F(z^k) - C\xi_{k+1}\right). 
\end{equation}
Since $\bar{z}^{k+1}$ is the solution of \eqref{eq:GE} at $\xi_{k+1}$, we have $0 = F(\bar{z}^{k+1}) + C\xi_{k+1} + \bar{u}^{k+1}$, where 
$\bar{u}^{k+1} \in \mathcal{N}_K(\bar{z}^{k+1})$. Moreover, since $\bar{z}^{k+1} = J_k(\tilde{F}'_k\bar{z}^{k+1} + \bar{u}^{k+1})$, we can
write
\begin{equation}\label{eq:sol_express}
\bar{z}^{k+1} = J_k\left(\tilde{F}'_k\bar{z}^{k+1} - F(\bar{z}^{k+1}) - C\xi_{k+1}\right). 
\end{equation}
The main result of this section is stated in the following theorem.

%+ Theorem 4.1.
\begin{theorem}\label{th:contraction_estimate}
Suppose that Assumptions \aref{as:A1}-\aref{as:A2} are satisfied for some $\xi_0\in\mathcal{P}$.
Then, for $k\geq 0$ and $\bar{z}^k\in Z^{*}(\xi_k)$, if $\mathrm{P}(\xi_k)$ is strongly regular at $\bar{z}^k$ then there exist
neighborhoods $\mathcal{B}(\bar{z}^k, r_z)$ and $\mathcal{B}(\xi_k, r_{\xi})$ such that:
\begin{itemize}
\item[a)] The set of KKT points $Z^{*}(\xi_{k+1})$ of $\mathrm{P}(\xi_{k+1})$ is nonempty for any $\xi_{k+1}\in\mathcal{B}(\xi_k, r_{\xi})$.
\item[b)] If, in addition, Assumption \aref{as:A3}$\mathrm{a)}$ is satisfied then subproblem $\mathrm{P}(z^k,A_k,H_k;\xi_{k+1})$ is uniquely
solvable in
the neighborhood $\mathcal{B}(\bar{z}^k, r_z)$.
\item[c)] Moreover, if, in addition, Assumption \aref{as:A3}$\mathrm{b)}$ is satisfied then the sequence $\{z^k\}_{k\geq 0}$ generated by
Algorithm \ref{alg:A1}, where $\xi_{k+1} \in \mathcal{B}(\xi_k, r_{\xi})$,
guarantees
\begin{eqnarray}\label{eq:contraction_estimate}
\norm{z^{k+1} - \bar{z}^{k+1}} &&\leq \left(\alpha + c_1\norm{z^k - \bar{z}^k}\right)\norm{z^k - \bar{z}^k} \nonumber\\
[-1.5ex]\\[-1.5ex]
&& + \left( c_2 + c_3\norm{\xi_{k+1}-\xi_k}\right)\norm{\xi_{k+1}-\xi_k}, \nonumber
\end{eqnarray}
where $ 0 \leq \alpha < 1$, $0 \leq c_i  < +\infty$, $i=1,\dots, 3$ and $c_2 > 0$ are given constants and $\bar{z}^{k+1}\in Z^{*}(\xi_{k+1})$.
\end{itemize}
\end{theorem}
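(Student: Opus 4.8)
The plan is to dispatch the three claims in order. Claim~a) is immediate from Lemma~\ref{le:nonemptyness_of_Gamma}: strong regularity of $\mathrm{P}(\xi_k)$ at $\bar{z}^k$ yields neighborhoods $\mathcal{B}(\xi_k,r_\xi)$ and $\mathcal{B}(\bar{z}^k,r_z)$ on which $Z^{*}(\xi_{k+1})$ is nonempty, its intersection with $\mathcal{B}(\bar{z}^k,r_z)$ is the single point $\bar{z}^{k+1}$, and---this will be used below---$\norm{\bar{z}^{k+1}-\bar{z}^k}\le\bar{\sigma}\norm{\xi_{k+1}-\xi_k}$ by \eqref{eq:lm31_estimate}. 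Claim~b) follows from Lemma~\ref{le:inverse_operator}: under Assumption~\aref{as:A3}$\mathrm{a)}$ the map $J_k=(\tilde{F}'_k+\mathcal{N}_K)^{-1}$ is single-valued and $\beta$-Lipschitz with $\beta=\gamma/(1-\gamma\kappa)$ on a ball $\mathcal{B}(\bar{v}^k,r_v)$ around $\bar{v}^k:=\tilde{F}'_k\bar{z}^k-F(\bar{z}^k)-C\xi_k$; since the KKT system \eqref{eq:gen_eq_for_subprob} of the subproblem is equivalent to $z^{k+1}=J_k(\tilde{F}'_kz^k-F(z^k)-C\xi_{k+1})$, it suffices to shrink $r_z,r_\xi$ so that $\tilde{F}'_kz^k-F(z^k)-C\xi_{k+1}$ stays in $\mathcal{B}(\bar{v}^k,r_v)$, which holds because its distance to $\bar{v}^k$ is at most $\big(\norm{\tilde{F}'_k}+\sup_{z\in\mathcal{B}(\bar{z}^k,r_z)}\norm{F'(z)}\big)\norm{z^k-\bar{z}^k}+\norm{C}\norm{\xi_{k+1}-\xi_k}$.

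For claim~c), I would combine the two fixed-point identities already derived, $z^{k+1}=J_k(v^{k+1})$ from \eqref{eq:zkplus1} and $\bar{z}^{k+1}=J_k(\bar{v}^{k+1})$ from \eqref{eq:sol_express}, where $v^{k+1}:=\tilde{F}'_kz^k-F(z^k)-C\xi_{k+1}$ and $\bar{v}^{k+1}:=\tilde{F}'_k\bar{z}^{k+1}-F(\bar{z}^{k+1})-C\xi_{k+1}$. After shrinking the radii once more so that both $v^{k+1}$ and $\bar{v}^{k+1}$ lie in $\mathcal{B}(\bar{v}^k,r_v)$ (using \eqref{eq:lm31_estimate} for the latter), the Lipschitz bound \eqref{eq:lm32_estimate} gives $\norm{z^{k+1}-\bar{z}^{k+1}}\le\beta\norm{v^{k+1}-\bar{v}^{k+1}}$. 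The $C\xi_{k+1}$ terms cancel, and inserting $\bar{z}^k$ I would split $v^{k+1}-\bar{v}^{k+1}=T_1+T_2$ with $T_1:=\tilde{F}'_k(z^k-\bar{z}^k)-\big(F(z^k)-F(\bar{z}^k)\big)$ and $T_2:=\tilde{F}'_k(\bar{z}^k-\bar{z}^{k+1})-\big(F(\bar{z}^k)-F(\bar{z}^{k+1})\big)$. Writing $F(a)-F(b)=\int_0^1 F'(b+t(a-b))(a-b)\,dt$ turns each $T_i$ into $\int_0^1\big[\tilde{F}'_k-F'(b+t(a-b))\big](a-b)\,dt$, and $\norm{\tilde{F}'_k-F'(b+t(a-b))}\le\norm{\tilde{F}'_k-F'(\bar{z}^k)}+\norm{F'(\bar{z}^k)-F'(b+t(a-b))}\le\kappa+\omega\,\norm{b+t(a-b)-\bar{z}^k}$ by \eqref{eq:A3_condition} and \eqref{eq:A4_condition}. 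Since the relevant segments stay in $\mathcal{B}(\bar{z}^k,r_z)$, this yields $\norm{T_1}\le\big(\kappa+\tfrac{\omega}{2}\norm{z^k-\bar{z}^k}\big)\norm{z^k-\bar{z}^k}$ and $\norm{T_2}\le\big(\kappa+\tfrac{\omega}{2}\norm{\bar{z}^k-\bar{z}^{k+1}}\big)\norm{\bar{z}^k-\bar{z}^{k+1}}$; substituting $\norm{\bar{z}^k-\bar{z}^{k+1}}\le\bar{\sigma}\norm{\xi_{k+1}-\xi_k}$ and collecting terms produces \eqref{eq:contraction_estimate} with $\alpha=\beta\kappa$, $c_1=\tfrac{\beta\omega}{2}$, $c_2=\beta\bar{\sigma}\kappa$ and $c_3=\tfrac{\beta\omega\bar{\sigma}^2}{2}$. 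Finally $\alpha=\gamma\kappa/(1-\gamma\kappa)<1$ because $\kappa<\tfrac{1}{2\gamma}$ forces $\gamma\kappa<\tfrac12$, and since enlarging $c_2$ only weakens the inequality one may always take $c_2>0$.

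I expect the only real difficulty to be the neighborhood bookkeeping: the radii $r_z$, $r_\xi$, $r_v$ supplied by Lemmas~\ref{le:nonemptyness_of_Gamma} and~\ref{le:inverse_operator} must be shrunk simultaneously---and uniformly in $k$, which is legitimate because $\gamma$, $\bar{r}_z$, $\bar{r}_\delta$, $\kappa$, $\omega$, $\bar{\sigma}$ are all $k$-independent---so that $\bar{z}^{k+1}$, the point $z^k$, and the two segments $[\bar{z}^k,z^k]$ and $[\bar{z}^k,\bar{z}^{k+1}]$ all remain inside $\mathcal{B}(\bar{z}^k,r_z)$, while both $v^{k+1}$ and $\bar{v}^{k+1}$ remain inside $\mathcal{B}(\bar{v}^k,r_v)$. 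Once the geometry is arranged, the algebraic estimates---the integral mean-value argument combining \eqref{eq:A3_condition} and \eqref{eq:A4_condition}---are entirely routine.
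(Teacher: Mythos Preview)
Your proposal is correct and follows essentially the same route as the paper: Lemma~\ref{le:nonemptyness_of_Gamma} for part~a), Lemma~\ref{le:inverse_operator} for part~b), and for part~c) the identical combination of the fixed-point identities \eqref{eq:zkplus1}--\eqref{eq:sol_express}, the $\beta$-Lipschitz bound on $J_k$, the split through $\bar{z}^k$, and the integral mean-value argument with \eqref{eq:A3_condition}--\eqref{eq:A4_condition}, yielding exactly the constants $\alpha=\beta\kappa$, $c_1=\beta\omega/2$, $c_2=\beta\bar{\sigma}\kappa$, $c_3=\beta\omega\bar{\sigma}^2/2$. Your treatment is slightly more careful than the paper's in two respects---you make the neighborhood bookkeeping explicit, and your remark that one may enlarge $c_2$ to guarantee $c_2>0$ patches a small gap (the paper's formula gives $c_2=0$ when $\kappa=0$).
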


%+ Proof of Theorem 4.1.
\begin{proof}
We prove the theorem by induction.
For $k=0$, we have $Z^{*}(\xi_0)$ is nonempty by Assumption \aref{as:A2}. Now, we assume $Z^{*}(\xi_k)$ is nonempty for some $k\geq 0$. We
will prove that $Z^{*}(\xi_{k+1})$ is nonempty for some $\xi_{k+1}\in\mathcal{B}(\xi_k, r_{\xi})$, a neighborhood of $\xi_k$. 
 
Indeed, since $Z^{*}(\xi_k)$ is nonempty for some $\xi_k\in\mathcal{P}$, we take an arbitrary $\bar{z}^k\in Z^{*}(\xi_k)$ such that 
$\mathrm{P}(\xi_k)$ is strong regular at $\bar{z}^k$. Now, by applying Lemma \ref{le:nonemptyness_of_Gamma} to problem $\mathrm{P}(\xi_k)$,
then we conclude that there exist neighborhoods $\mathcal{B}(\bar{z}^k, r_z)$ of $\bar{z}^k$ and $\mathcal{B}(\xi_k, r_{\xi})$ of $\xi_k$
such that $Z^{*}(\xi_{k+1})$ is nonempty for any $\xi_{k+1}\in\mathcal{B}(\xi_k, r_{\xi})$.

Next, if, in addition, Assumption \aref{as:A3}$\mathrm{a)}$ holds then the conclusions of Lemma \ref{le:inverse_operator} hold. By
induction, we conclude that the convex subproblem $\mathrm{P}(\bar{z}^k,A_k,\xi_k)$ is uniquely solvable in $\mathcal{B}(\bar{z}^k, r_z)$
for any $\xi_{k+1}\in\mathcal{B}(\xi_k, r_{\xi})$.
   
Finally, we prove inequality \eqref{eq:contraction_estimate}.
From \eqref{eq:zkplus1}, \eqref{eq:sol_express} and the mean-value theorem and Assumption \aref{as:A3}$\mathrm{b)}$, we have
\begin{eqnarray}\label{eq:proof_thm31_est1}
\norm{z^{k\!+\!1} \!\!-\! \bar{z}^{k\!+\!1}} && \overset{\tiny\eqref{eq:zkplus1}}{=} \norm{J_k\left((\tilde{F}'_kz^k - F(z^k) -
C\xi_{k+1}\right) - \bar{z}^{k+1}} \nonumber\\
&& \overset{\tiny\eqref{eq:sol_express}}{=} \norm{J_k\left(\tilde{F}'_kz^k - F(z^k) - C\xi_{k+1}\right) - J_k\left(\tilde{F}'_k\bar{z}^{k+1}
-  F(\bar{z}^{k+1}) - C\xi_{k+1}\right)}
\nonumber \\
&& \overset{\tiny\eqref{eq:lm32_estimate}}{\leq} \beta\norm{\tilde{F}'_k(z^k - \bar{z}^{k+1}) - F(z^k) + F(\bar{z}^{k+1})} \nonumber\\
&& = \beta\!\norm{\left[\tilde{F}'_k(z^k \!-\! \bar{z}^k) \!-\! F(z^k) \!+\! F(\bar{z}^k)\right] \!+\! \left[F(\bar{z}^{k \!+\! 1})
\!-\! F(\bar{z}^k) \!-\! \tilde{F}_k'(\bar{z}^{k\!+\!1} \!-\! \bar{z}^k)\right]} \nonumber\\
[-1.5ex]\\[-1.5ex]
&&  \leq \beta\!\norm{[\tilde{F}_k' - F'(\bar{z}^k)](z^k-\bar{z}^k) - \int_0^1\!\![F'(\bar{z}^k \!+\! t(z^k \!-\! \bar{z}^k)) \!-\!
F'(\bar{z}^k)](z^k \!-\! \bar{z}^k)dt} \nonumber\\
&& +  \beta\!\norm{[\tilde{F}_k' \!-\! F'(\bar{z}^k)](\bar{z}^{k \!+\!1} \!\!-\! \bar{z}^k) - \int_0^1\!\!\! [F'(\bar{z}^k \!+\!
t(\bar{z}^{k \!+\! 1} \!\!\!\!-\! \bar{z}^k)) \!-\! F'(\bar{z}^k)](z^{k \!+\! 1} \!\!\!\!-\! \bar{z}^k)dt} \nonumber\\
&& \overset{\tiny\eqref{eq:A3_condition}+\eqref{eq:A4_condition}}{\leq} \beta\left(\kappa + \frac{\omega}{2}\norm{z^k -
\bar{z}^k}\right)\norm{z^k-\bar{z}^k} \nonumber\\
&& + \beta\left(\kappa + \frac{\omega}{2}\norm{\bar{z}^{k+1} - \bar{z}^k}\right)\norm{\bar{z}^{k+1} - \bar{z}^k} \nonumber. 
\end{eqnarray}
By substituting \eqref{eq:lm31_estimate} into \eqref{eq:proof_thm31_est1} we obtain
\begin{eqnarray*}
\norm{z^{k+1} - \bar{z}^{k+1}} &&\leq \beta\left(\kappa + \frac{\omega}{2}\norm{z^k - \bar{z}^k}\right)\norm{z^k-\bar{z}^k} \nonumber\\
&& + \beta\left(\kappa\bar{\sigma} + \frac{\omega\bar{\sigma}^2}{2}\norm{\xi_{k+1} - \xi_k}\right)\norm{\xi_{k+1} - \xi_k}.  
\end{eqnarray*}
If we define $\alpha := \beta\kappa = \frac{\gamma\kappa}{1-\gamma\kappa} < 1$ due to \aref{as:A3}a), $c_1 :=
\frac{\gamma\omega}{2(1-\gamma\kappa)}
\geq 0$, $c_2 :=  \frac{\gamma\kappa\bar{\sigma}}{1-\gamma\kappa} > 0$ and $c_3 := \frac{\gamma\omega\bar{\sigma}^2}{2(1-\gamma\kappa)} \geq 0$ as
four given constants then the last inequality is indeed \eqref{eq:contraction_estimate}.
\end{proof}
%+ End of proof.

The following corollary shows the stability of the approximate sequence $\{z^k\}_{k\geq 0}$ generated by Algorithm \ref{alg:A1}. 
   
%+ Corollary 4.1
\begin{corollary}\label{co:tracking_corollary}
Under the assumptions of Theorem \ref{th:contraction_estimate}, there exists a positive number $0 < r_z < \bar{r}_z := (1-\alpha)c_1^{-1}$
such that if the initial point $z^0$ in Algorithm \ref{alg:A1} is chosen such that $\norm{z^0 - \bar{z}^0}\leq r_z$, where $\bar{z}^0 \in
Z^{*}(\xi_0)$ then, for any $k\geq 0$, we have
\begin{equation}\label{eq:co31_stability}
\norm{z^{k+1} - \bar{z}^{k+1}} \leq r_z, 
\end{equation}
provided that $\norm{\xi_{k+1} - \xi_k} \leq r_{\xi}$, where $\bar{z}^{k+1} \in Z^{*}(\xi_{k+1})$ and $0 < r_{\xi} \leq \bar{r}_{\xi}$ with
\begin{equation*}
\bar{r}_{\xi} := \begin{cases} (2c_3)^{-1}\left[\sqrt{c_2^2 + 4c_3r_z(1-\alpha - c_1r_z)} - c_2\right] & \textrm{if} ~ c_3 > 0,\\
c_2^{-1}r_z(1-\alpha - c_1r_z) &\textrm{if}~ c_3 = 0.                  
\end{cases}
\end{equation*}
Consequently, the error sequence $\{\textrm{e}_k\}_{k\geq 0}$, where $\textrm{e}_{k} := \norm{z^{k} - \bar{z}^{k}}$, between the exact KKT
point $\bar{z}^{k}$ and the approximate KKT point $z^{k}$ of $\mathrm{P}(\xi_{k})$ is bounded.   
\end{corollary}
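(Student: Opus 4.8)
The plan is to prove \eqref{eq:co31_stability} by induction on $k$, using the one-step contraction estimate \eqref{eq:contraction_estimate} of Theorem \ref{th:contraction_estimate} as the driving recursion, with $r_z$ and $r_{\xi}$ chosen precisely so that the right-hand side of \eqref{eq:contraction_estimate} collapses back to $r_z$.

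First I would isolate the scalar condition that $r_z$ must satisfy. Writing $\mathrm{e}_k := \norm{z^k - \bar z^k}$ and assuming $\mathrm{e}_k \le r_z$ together with $\norm{\xi_{k+1}-\xi_k} \le r_{\xi}$, estimate \eqref{eq:contraction_estimate} gives
\[
\mathrm{e}_{k+1} \le (\alpha + c_1 r_z)\,r_z + (c_2 + c_3 r_{\xi})\,r_{\xi}.
\]
Hence $\mathrm{e}_{k+1}\le r_z$ is guaranteed as soon as $(c_2+c_3 r_{\xi})r_{\xi} \le r_z(1-\alpha-c_1 r_z)$. The right-hand side here is strictly positive exactly when $r_z < (1-\alpha)c_1^{-1} = \bar r_z$ (the restriction being vacuous if $c_1=0$), which is the reason the corollary confines $r_z$ to that interval. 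I would also, if necessary, shrink $r_z$ so that it does not exceed the radius of the neighborhood $\mathcal{B}(\bar z^k, r_z)$ furnished by Theorem \ref{th:contraction_estimate}, so that $z^k\in\mathcal{B}(\bar z^k,r_z)$ always keeps us inside the region where \eqref{eq:contraction_estimate} is valid.

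With such an $r_z$ fixed, the number $\rho := r_z(1-\alpha-c_1 r_z)$ is a fixed positive constant and the requirement on $r_{\xi}$ becomes $c_3 r_{\xi}^2 + c_2 r_{\xi} - \rho \le 0$. When $c_3>0$ this quadratic has the unique positive root $\bar r_{\xi} = (2c_3)^{-1}\big[\sqrt{c_2^2+4c_3\rho}-c_2\big]$ and the inequality holds throughout $(0,\bar r_{\xi}]$; when $c_3=0$ it reduces to the linear bound $r_{\xi}\le \rho/c_2 = \bar r_{\xi}$. These are exactly the two cases of the stated formula for $\bar r_{\xi}$ after substituting back $\rho = r_z(1-\alpha-c_1 r_z)$; again I would intersect $(0,\bar r_{\xi}]$ with the neighborhood radius called $r_{\xi}$ in Theorem \ref{th:contraction_estimate} so that $\norm{\xi_{k+1}-\xi_k}\le r_{\xi}$ implies $\xi_{k+1}\in\mathcal{B}(\xi_k,r_{\xi})$.

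The induction is then routine. The base case is the hypothesis $\norm{z^0-\bar z^0}\le r_z$. For the step, suppose $\norm{z^k-\bar z^k}\le r_z$; since the standing assumptions of Theorem \ref{th:contraction_estimate} include strong regularity of $\mathrm{P}(\xi_k)$ at $\bar z^k$, part (a) (through Lemma \ref{le:nonemptyness_of_Gamma}) provides the point $\bar z^{k+1}\in Z^{*}(\xi_{k+1})$, part (b) makes $z^{k+1}$ well defined as the unique KKT point of $\mathrm{P}(z^k,A_k,H_k;\xi_{k+1})$ in $\mathcal{B}(\bar z^k,r_z)$, and part (c) together with the choice of $r_z$ and $r_{\xi}$ above yields $\norm{z^{k+1}-\bar z^{k+1}}\le r_z$; strong regularity at $\bar z^{k+1}$ then lets the step be repeated. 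Boundedness of $\{\mathrm{e}_k\}$ is immediate with the uniform bound $r_z$. I expect the only mildly delicate point to be the bookkeeping — keeping the corollary's $r_z$ (a tracking tolerance) compatible with the neighborhood radii also named $r_z$ and $r_{\xi}$ in the theorem — but since all those radii may only be decreased, one sufficiently small common choice works for every $k$.
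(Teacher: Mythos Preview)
Your argument is correct and follows essentially the same route as the paper: both proofs use induction on $k$, feed the inductive hypothesis $\mathrm{e}_k\le r_z$ and the bound $\norm{\xi_{k+1}-\xi_k}\le r_{\xi}$ into \eqref{eq:contraction_estimate}, and then solve the quadratic $c_3 r_{\xi}^2 + c_2 r_{\xi} \le r_z(1-\alpha-c_1 r_z)$ to obtain the stated formula for $\bar r_{\xi}$. Your added remark about reconciling the tracking tolerance with the neighborhood radii of Theorem~\ref{th:contraction_estimate} is a useful clarification the paper leaves implicit.
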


%+ The proof of Corollary 4.1.
\begin{proof}
Since $0 \leq \alpha < 1$, we have $\bar{r}_z := (1-\alpha)c_1^{-1} > 0$. Let us choose $r_z$ such that $0 < r_z < \bar{r}_z$.
If $z^0\in \mathcal{B}(\bar{z}^0, r_z)$, i.e. $\norm{z^0 - \bar{z}^0} \leq r_z$, then it follows from
\eqref{eq:contraction_estimate} that 
\begin{eqnarray*}
\norm{z^1 - \bar{z}^1} \leq (\alpha + c_1r_z)r_z + (c_2 + c_3\norm{\xi_1 - \xi_0})\norm{\xi_1 - \xi_0}. 
\end{eqnarray*}
In order to ensure $\norm{z^1 - \bar{z}^1} \leq r_z$, we need $(c_2 + c_3\norm{\xi_1 - \xi_0})\norm{\xi_1 - \xi_0} \leq \rho := (1-\alpha - c_1r_z)r_z$.
Since $0 < r_z < \bar{r}_z$, $\rho > 0$. The last condition leads to $\norm{\xi_1 - \xi_0} \leq (2c_3)^{-1}(\sqrt{c_2^2 + 4c_3\rho} - c_2)$ if $c_3 > 0$ and
$\norm{\xi_1 - \xi_0} \leq c_2^{-1}r_z(1-\alpha - c_1r_z)$ if $c_3 = 0$.
By induction, we conclude that inequality \eqref{eq:co31_stability} holds for all $k\geq 0$.
\end{proof}

The conclusion of Corollary \ref{co:tracking_corollary} is illustrated in Figure \ref{fig:for_proof}, where the approximate sequence
$\{z^k\}_{k\geq 0}$ computed by 
Algorithm \ref{alg:A1} remains close to the sequence of the true KKT points $\{\bar{z}^k\}_{k\geq 0}$ if the starting point $z^0$ is
sufficiently close to $\bar{z}_0$.

%+ Picture of the proof.
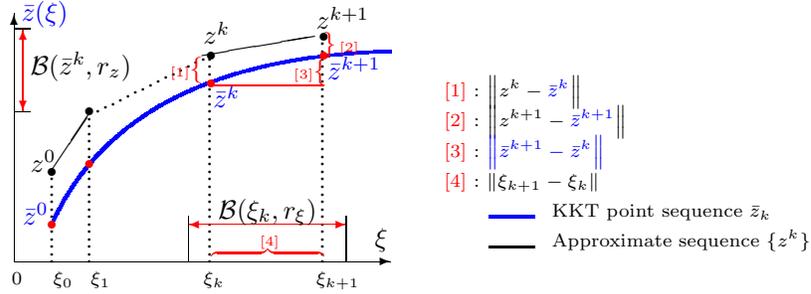
\begin{figure}[ht]
\setlength{\unitlength}{1mm}
\begin{picture}(60, 40)\label{fig:proof}
  \linethickness{0.075mm}
  \put(20, 0){\vector(0, 1){33}}
  \put(21,32){\color{blue}$\bar{z}(\xi)$}
  \put(20, 0){\vector(1, 0){50}} 
  \put(68, 2){$\xi$}
  {\color{blue}
  \linethickness{0.3mm}
  \qbezier(25, 5)(35, 27)(70, 28)
  }
  \linethickness{0.075mm}
  \put(25,12.2){\line(2,3){5}}
  \multiput(25,0)(0,1){13}{\circle*{0.01}}
  \multiput(30,0)(0,1){20}{\circle*{0.01}}
  \put(45,27.5){\line(6,1){15}}
  \multiput(46,0)(0,1){28}{\circle*{0.01}}
  \multiput(61,0)(0,1){31}{\circle*{0.01}}
  \put(25,12){\circle*{1}}
  {\color{red}\put(25,5){\circle*{1}}}
  \put(30,20){\circle*{1}}
  {\color{red}\put(30,13){\circle*{1}}}
  {\color{red}\put(45,23.7){\circle*{1}}}
  \put(45,27.5){\circle*{1}}
  \put(60,30){\circle*{1}}
  {\color{red}\put(60,27.5){\circle*{1}}}
  \multiput(30,20)(1,0.5){16}{\circle*{0.1}} 
  \put(24,-3){{\scriptsize$\xi_0$}}
  \put(29,-3){{\scriptsize$\xi_1$}}
  \put(44,-3){{\scriptsize$\xi_k$}}
  \put(59,-3){{\scriptsize$\xi_{k+1}$}}
  \put(21, 12){$z^0$}
  \put(20, 5){$\color{blue}\bar{z}^0$}
  \put(44, 29){$z^k$}
  \put(45.3, 20.5){\color{blue}$\bar{z}^k$}
  \put(59, 31){$z^{k+1}$}
  \put(60.5, 24.3){$\color{blue}\bar{z}^{k+1}$}
  \put(18.5,-3){\scriptsize$0$}
  \put(42.0,25.5){\color{red}\makebox(0,0){{\tiny[1]}$\left\{\right.$}}
  \put(62.3,28.8){\color{red}\makebox(0,0){$\left\}\right.${\tiny[2]}}}
  \put(45,0){\color{red}\tiny$\overbrace{\rule{15mm}{0cm}}$}
  \put(51.5,2.2){\color{red}\tiny[4]}
  \put(45,23.5){\color{red}\line(1,0){15}}
  \put(58.4,25.3){\color{red}\makebox(0,0){{\tiny[3]}$\left\{\right.$}}
  \put(20,20){\color{red}\vector(0,1){11}}
  \put(20,31){\color{red}\vector(0,-1){11}}
  \put(19,31){\line(1,0){2}}    
  \put(19,20){\line(1,0){2}}
  \put(21,25){$\mathcal{B}(\bar{z}^k, r_z)$}
  \put(42,5){\color{red}\vector(1,0){21}}
  \put(63,5){\color{red}\vector(-1,0){21}}
  \put(42,0){\line(0,1){6}}
  \put(63,0){\line(0,1){6}}
  \put(46,5.8){$\mathcal{B}(\xi_k,r_{\xi})$}
  % These are the lables.
  \put(76,22){\scriptsize{\color{red}[1]}~:~$\norm{z^k-{\color{blue}\bar{z}^k}}$}  
  \put(76,18){\scriptsize{\color{red}[2]}~:~$\norm{z^{k+1}-{\color{blue}\bar{z}^{k+1}}}$}
  \put(76,14){\scriptsize{\color{red}[3]}~:~$\color{blue}\norm{\bar{z}^{k+1}-\bar{z}^{k}}$}
  \put(76,10){\scriptsize{\color{red}[4]}~:~$\norm{\xi_{k+1}-\xi_k}$}
  \put(76,6){\scriptsize{\color{red}}~~}\put(82,6){{\color{blue}\linethickness{0.3mm}\line(1,0){6}}~~\textrm{\scriptsize{KKT point sequence 
   $\bar{z}_k$}}}
  \put(76,2){\scriptsize{\color{red}}~~}\put(82,2){{\color{black}\linethickness{0.3mm}\line(1,0){6}}~~\textrm{\scriptsize{Approximate
sequence $\{z^k\}$}}}
\end{picture}
\vskip 0.2cm
\caption{The approximate sequence $\{z^k\}_{k\geq 0}$ along the trajectory $\bar{z}(\cdot)$ of the KKT points.}\label{fig:for_proof}   
\end{figure}  

%+ 4.4. The convergence of inexact Jacobian matrix.
\subsection{A contraction estimate for APCSCP using an exact Jacobian matrix}
If $A_k \equiv g'(x^k)$ then the correction vector $m^k = 0$ and the convex subproblem \ref{eq:convex_subprob} collapses to the following
one:
\begin{equation}\label{eq:exact_convex_subprob}
\makeatletter
\def\tagform@#1{\maketag@@@{#1\@@italiccorr}}
\makeatother
\left\{\begin{array}{cl}
\displaystyle
\min_{x\in\R^n} &\Big\{ c^Tx + \frac{1}{2}(x-x^k)^TH_k(x-x^k) \Big\} \\
\textrm{s.t.} &g(x^k) + g'(x^k)(x-x^k) + M\xi = 0, \\
       &x \in \Omega.
\end{array}\right.\tag{$\mathrm{P}(x^k, H_k;\xi)$}
\end{equation}
Note that problem \ref{eq:exact_convex_subprob} does not depend on the multiplier $y^k$ if we choose $H_k$ independently of $y^k$. We refer to a variant of
Algorithm \ref{alg:A1}
where we use the convex subproblem \ref{eq:exact_convex_subprob} instead of \ref{eq:convex_subprob} as a \textit{predictor-corrector SCP
algorithm} (PCSCP) for solving a sequence of the optimization problems $\{\mathrm{P}(\xi_k)\}_{k\geq 0}$. 

Instead of Assumption \aref{as:A3}a) in the previous section, we make the following assumption.
\setcounter{assumptionm}{2}
\begin{assumptionm}\label{as:A3'}
There exists a constant $0 \leq \tilde{\kappa} < \frac{1}{2\gamma}$ such that 
\begin{equation}\label{eq:A3'_assumption}
\norm{\nabla^2_x\mathcal{L}(\bar{z}^k) - H_k} \leq \tilde{\kappa}, ~\forall k\geq0.
\end{equation}
where $\nabla^2_x\mathcal{L}(z)$ defined by \eqref{eq:P_E11_term}.
\end{assumptionm}

Assumption \aref{as:A3'} requires that the approximation $H_k$ to the Hessian matrix $\nabla^2_x\mathcal{L}(\bar{z}^k)$ of the Lagrange
function $\mathcal{L}$ at $\bar{z}^k$ is sufficiently close. Note that matrix $H_k$ in the framework of the SSDP method in \cite{Correa2002} is not
necessarily positive definite.

%+ Example 4.2
\vskip0.1cm
\begin{example}\label{ex:example3} 
Let us continue analyzing example \eqref{eq:tutorial_example}.
The Hessian matrix of the Lagrange function $\mathcal{L}$ associated with the equality constraint $x_1^2 + 2x_2 + 2 - 4\xi = 0$ is
$\nabla_x^2\mathcal{L}(x^{*}_{\xi},y^{*}_1) = \left[\begin{smallmatrix}2y^{*}_1 & 0\\ 0 & 0\end{smallmatrix}\right]$, where $y^{*}_1$ is the multiplier
associated with the equality constraint at $x^{*}_{\xi}$.
Let us choose a positive semidefinite matrix $H_k := \left[\begin{smallmatrix}h_{11} & 0 \\ 0 & 0\end{smallmatrix}\right]$, where $h_{11} \geq 0$, then
$\norm{\nabla^2_x\mathcal{L}(x^{*}_{\xi}, y^{*}_1) - H_k} = \abs{y^{*}_1-h_{11}}$.
Since $y_1^{*} \geq 0$, for an arbitrary $\tilde{\kappa} > 0$, we can choose $h_{11}\geq 0$ such that $\abs{h_{11}-y^{*}_1} \leq \tilde{\kappa}$.
Consequently, the condition \eqref{eq:A3'_assumption} is satisfied. In the example \eqref{eq:tutorial_example} of Subsection \ref{subsec:example}, we choose
$h_{11} = 0$.
\end{example} 

The following theorem shows the same conclusions as in Theorem \ref{th:contraction_estimate} and Corollary \ref{co:tracking_corollary} for
the \textit{predictor-corrector SCP algorithm}. 

%+ Theorem 4.2.
\begin{theorem}\label{th:exact_jacobian_estimate}
Suppose that Assumptions \aref{as:A1}-\aref{as:A2} are satisfied for some $\xi_0\in\mathcal{P}$.
Then, for $k\geq 0$ and $\bar{z}^k\in Z^{*}(\xi_k)$, if $\mathrm{P}(\xi_k)$ is strongly regular at $\bar{z}^k$ then there exist
neighborhoods $\mathcal{B}(\bar{z}^k, r_z)$ and $\mathcal{B}(\xi_k, r_{\xi})$ such that:
\begin{itemize}
\item[a)] The set of KKT points $Z^{*}(\xi_{k+1})$ of $\mathrm{P}(\xi_{k+1})$ is nonempty for any $\xi_{k+1}\in\mathcal{B}(\xi_k, r_{\xi})$.

\item[b)] If, in addition, Assumption \aref{as:A3'} is satisfied then subproblem $\mathrm{P}(x^k, H_k;\xi_{k+1})$ is uniquely
solvable in the neighborhood $\mathcal{B}(\bar{z}^k, r_z)$.

\item[c)] Moreover, if, in addition, Assumption \aref{as:A3}$\mathrm{b)}$ then the sequence $\{z^k\}_{k\geq 0}$ generated by the PCSCP,
where $\xi_{k+1} \in \mathcal{B}(\xi_k, r_{\xi})$, guarantees
the following inequality:
\begin{eqnarray}\label{eq:contraction_estimate1}
\norm{z^{k+1} - \bar{z}^{k+1}} &&\leq \left(\tilde{\alpha} + \tilde{c}_1\norm{z^k-\bar{z}^k}\right)\norm{z^k - \bar{z}^k} \nonumber\\
&& + \left(\tilde{c}_2 + \tilde{c}_3\norm{\xi_{k+1}-\xi_k}\right)\norm{\xi_{k+1}-\xi_k},
\end{eqnarray}
where $ 0 \leq \tilde{\alpha} < 1$, $0 \leq \tilde{c}_i < +\infty$, $i=1,\cdots,3$ and $\tilde{c}_2 > 0$ are given constants and $\bar{z}^{k+1} \in
Z^{*}(\xi_{k+1})$. 

\item[d)] If the initial point $z^0$ in the \textit{PCSCP} is chosen such that $\norm{z^0 - \bar{z}^0}\leq \tilde{r}_z$, where $\bar{z}^0
\in Z^{*}(\xi_0)$ and $0 < \tilde{r}_z < \tilde{\bar{r}}_z := \tilde{c}_1^{-1}(1 - \tilde{\alpha})$, then:
\begin{equation}\label{eq:co31_stability1}
\norm{z^{k+1} - \bar{z}^{k+1}} \leq \tilde{r}_z, 
\end{equation}
provided that $\norm{\xi_{k+1}-\xi_k} \leq \tilde{r}_{\xi}$ with $0 < \tilde{r}_{\xi} \leq \bar{\tilde{r}}_{\xi}$,
\begin{equation*}
\bar{\tilde{r}}_{\xi} := \begin{cases} (2\tilde{c}_3)^{-1}\left[\sqrt{{\tilde{c}_2}^2 + 4\tilde{c}_3\tilde{r}_z(1 -\tilde{\alpha} -
\tilde{c}_1\tilde{r}_z)} - \tilde{c}_2\right] &\textrm{if}~ \tilde{c}_3 > 0,\\
\tilde{c}_2^{-1}\tilde{r}_z(1 - \tilde{\alpha} - \tilde{c}_1\tilde{r}_z) &\textrm{if} ~ \tilde{c}_3 = 0.
\end{cases} 
\end{equation*}
Consequently, the error sequence  $\{ \norm{z^{k} - \bar{z}^{k}} \}_{k\geq 0}$ between the exact KKT point $\bar{z}^{k}$ and
the approximation KKT point $z^{k}$ of $\mathrm{P}(\xi_{k})$ is still bounded. 
\end{itemize}
\end{theorem}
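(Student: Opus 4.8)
The plan is to mirror, essentially line for line, the arguments behind Theorem~\ref{th:contraction_estimate} and Corollary~\ref{co:tracking_corollary}, the only structural change being that in the exact-Jacobian case the matrix of \eqref{eq:H_matrix} reads $\tilde{F}'_k=\left[\begin{smallmatrix}H_k & g'(x^k)^T\\ g'(x^k) & 0\end{smallmatrix}\right]$, so its defect relative to $F'(\bar{z}^k)$ is
\begin{equation*}
F'(\bar{z}^k)-\tilde{F}'_k=\begin{bmatrix}\nabla_x^2\mathcal{L}(\bar{z}^k)-H_k & g'(\bar{x}^k)^T-g'(x^k)^T\\ g'(\bar{x}^k)-g'(x^k) & 0\end{bmatrix}.
\end{equation*}
By Assumption~\aref{as:A3'} the top-left block has norm at most $\tilde{\kappa}$; the off-diagonal blocks contribute $\norm{g'(\bar{x}^k)-g'(x^k)}$, which is small for $x^k$ near $\bar{x}^k$ since $g'$ is continuous by Assumption~\aref{as:A1}, and which — being the $(2,1)$-block of $F'(z^k)-F'(\bar{z}^k)$ — is bounded by $\omega\norm{z^k-\bar{z}^k}$ via \eqref{eq:A4_condition} whenever \aref{as:A3}$\mathrm{b)}$ holds. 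Hence on a small enough ball $\tilde{F}'_k$ plays exactly the role $\tilde{F}'_k$ had under Assumption~\aref{as:A3}$\mathrm{a)}$, with the ``effective residual'' estimate $\norm{F'(\bar{z}^k)-\tilde{F}'_k}\le\tilde{\kappa}+\omega\norm{z^k-\bar{z}^k}$ available for the contraction argument.

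The one genuinely new point — and the step I expect to need the most care — is the coupling between the radius $r_z$ and this effective residual, since Lemmas~\ref{le:nonemptyness_of_Gamma} and~\ref{le:inverse_operator} require a residual \emph{strictly} below $\tfrac{1}{2\gamma}$, whereas the bound above involves $r_z$ itself. I would break the circularity by fixing an intermediate constant $\kappa_0$ with $\tilde{\kappa}<\kappa_0<\tfrac{1}{2\gamma}$, say $\kappa_0:=\tfrac{1}{2}(\tilde{\kappa}+\tfrac{1}{2\gamma})$, and then proceed in three steps: (i) apply Lemma~\ref{le:nonemptyness_of_Gamma} to get neighbourhoods $\mathcal{B}(\xi_k,r_\xi)$, $\mathcal{B}(\bar{z}^k,r_z)$ with $Z^{*}(\xi_{k+1})$ nonempty for $\xi_{k+1}\in\mathcal{B}(\xi_k,r_\xi)$ (part~a) and the estimate \eqref{eq:lm31_estimate} with a constant $\bar{\sigma}$; (ii) apply Lemma~\ref{le:inverse_operator} with $\kappa_0$ in the role of $\kappa$, obtaining that $J_k:=(\tilde{F}'_k+\mathcal{N}_K)^{-1}$ is single-valued and Lipschitz with constant $\beta_0:=\gamma/(1-\gamma\kappa_0)$ near $\bar{v}^k:=\tilde{F}'_k\bar{z}^k-F(\bar{z}^k)-C\xi_k$; (iii) shrink $r_z$ (and $r_\xi$ if necessary) so that $\norm{g'(x^k)-g'(\bar{x}^k)}\le\kappa_0-\tilde{\kappa}$ for all $x^k$ with $\norm{x^k-\bar{x}^k}\le r_z$, which forces $\norm{F'(\bar{z}^k)-\tilde{F}'_k}\le\kappa_0$ uniformly on $\mathcal{B}(\bar{z}^k,r_z)$, so both lemmas genuinely apply with the \emph{fixed} constant $\kappa_0$. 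Steps (i)--(iii) already give parts~a) and~b), unique solvability of \ref{eq:exact_convex_subprob} in $\mathcal{B}(\bar{z}^k,r_z)$ following from the single-valuedness of $J_k$ exactly as in Theorem~\ref{th:contraction_estimate}.

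For part~c) I would transcribe the estimation chain in the proof of Theorem~\ref{th:contraction_estimate}. Using \eqref{eq:zkplus1} to write $z^{k+1}=J_k(\tilde{F}'_kz^k-F(z^k)-C\xi_{k+1})$ and \eqref{eq:sol_express} to write $\bar{z}^{k+1}=J_k(\tilde{F}'_k\bar{z}^{k+1}-F(\bar{z}^{k+1})-C\xi_{k+1})$, the Lipschitz bound \eqref{eq:lm32_estimate} for $J_k$ (constant $\beta_0$), then adding and subtracting $\tilde{F}'_k\bar{z}^k$ and $F(\bar{z}^k)$ and applying the mean value theorem, one splits $\norm{z^{k+1}-\bar{z}^{k+1}}$ into a term governed by $\norm{z^k-\bar{z}^k}$ and one governed by $\norm{\bar{z}^{k+1}-\bar{z}^k}$. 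Bounding the matrix defect by $\tilde{\kappa}+\omega\norm{z^k-\bar{z}^k}$ in the first term (which is what keeps the leading coefficient at $\beta_0\tilde{\kappa}$) and by $\kappa_0$ in the second, the integral remainders by $\tfrac{\omega}{2}\norm{\cdot}^2$ via \eqref{eq:A4_condition}, and substituting \eqref{eq:lm31_estimate}, yields \eqref{eq:contraction_estimate1} with $\tilde{\alpha}:=\beta_0\tilde{\kappa}=\gamma\tilde{\kappa}/(1-\gamma\kappa_0)$, $\tilde{c}_1:=\tfrac{3}{2}\beta_0\omega$, $\tilde{c}_2:=\beta_0\kappa_0\bar{\sigma}>0$ and $\tilde{c}_3:=\tfrac{1}{2}\beta_0\omega\bar{\sigma}^2$. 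Here $\tilde{\alpha}<1$ is automatic: from $\tilde{\kappa}<\tfrac{1}{2\gamma}$ and $\kappa_0<\tfrac{1}{2\gamma}$ we get $\gamma(\tilde{\kappa}+\kappa_0)<1$, i.e. $\gamma\tilde{\kappa}<1-\gamma\kappa_0$.

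Finally part~d) follows verbatim as in Corollary~\ref{co:tracking_corollary}: set $\tilde{\bar{r}}_z:=\tilde{c}_1^{-1}(1-\tilde{\alpha})>0$, pick $0<\tilde{r}_z<\tilde{\bar{r}}_z$, put $\rho:=\tilde{r}_z(1-\tilde{\alpha}-\tilde{c}_1\tilde{r}_z)>0$, and note that for $t:=\norm{\xi_{k+1}-\xi_k}$ the inequality $(\tilde{c}_2+\tilde{c}_3t)t\le\rho$ — which, combined with \eqref{eq:contraction_estimate1}, forces $\norm{z^{k+1}-\bar{z}^{k+1}}\le\tilde{r}_z$ whenever $\norm{z^k-\bar{z}^k}\le\tilde{r}_z$ — holds precisely when $t\le\bar{\tilde{r}}_\xi$, which is the stated positive quadratic root if $\tilde{c}_3>0$ and the stated linear bound if $\tilde{c}_3=0$. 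An induction starting from $\norm{z^0-\bar{z}^0}\le\tilde{r}_z$, with $r_\xi\le\bar{\tilde{r}}_\xi$, then gives \eqref{eq:co31_stability1} for every $k$ and hence boundedness of $\{\norm{z^k-\bar{z}^k}\}_{k\ge0}$. Apart from the $r_z$/residual bookkeeping of the second paragraph, every step is a direct copy of the corresponding step in the proofs of Theorem~\ref{th:contraction_estimate} and Corollary~\ref{co:tracking_corollary}.
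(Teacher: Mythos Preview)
Your proposal is correct and follows essentially the same route as the paper: both arguments observe that with $A_k=g'(x^k)$ the defect $F'(\bar{z}^k)-\tilde{F}'_k$ has the block $g'(\bar{x}^k)-g'(x^k)$ off the diagonal, bound this by shrinking $r_z$ so that an effective constant $\kappa<\tfrac{1}{2\gamma}$ is attained, and then rerun Lemma~\ref{le:inverse_operator}, the chain in Theorem~\ref{th:contraction_estimate}, and Corollary~\ref{co:tracking_corollary} verbatim. The only cosmetic differences are that the paper uses the Euclidean block bound $\norm{F'(\bar{z}^k)-\tilde{F}'_k}\le\sqrt{\tilde{\kappa}^2+2L_g^2\norm{x^k-\bar{x}^k}^2}$ (with $L_g$ a local Lipschitz constant for $g'$, available from \aref{as:A1}) and a single intermediate constant $\tilde{\kappa}_1:=\sqrt{\tilde{\kappa}^2+2L_g^2r_z^2}$ throughout, whereas you use the additive bound $\tilde{\kappa}+\omega\norm{z^k-\bar{z}^k}$ and split the roles of $\tilde{\kappa}$ and $\kappa_0$; this only shifts weight between $\tilde{\alpha}$ and $\tilde{c}_1$ in the final constants.
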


%+ The proof of Theorem 4.2.
\begin{proof}
The statement a) of Theorem \ref{th:exact_jacobian_estimate} follows from Theorem \ref{th:contraction_estimate}. We prove b).
Since $A_k \equiv g'(x^k)$, the matrix $\tilde{F}'_k$ defined in \eqref{eq:H_matrix} becomes
\begin{equation*}
\hat{\tilde{F}}'_k := \begin{bmatrix}H_k & g'(x^k) \\ g'(x^k) & 0\end{bmatrix}, 
\end{equation*}
Moreover, since $g$ is twice differentiable due to Assumption \aref{as:A1}, $g'$ is Lipschitz continuous with a Lipschitz constant $L_g
\geq 0$ in
$\mathcal{B}(\bar{x}^k,r_z)$. 
Therefore, by Assumption \aref{as:A3'}, we have
\begin{eqnarray}\label{eq:proof_thm51_est1}
\norm{F'(\bar{z}^k) - \hat{\tilde{F}}'_k}^2 &&=  \norm{\begin{bmatrix}\nabla^2_x\mathcal{L}(\bar{z}^k) & g'(\bar{x}^k)^T - g'(x^k)^T \\
g'(\bar{x}^k) -
g'(x^k) & 0\end{bmatrix}}^2 \nonumber\\
&& \leq \norm{\nabla^2_x\mathcal{L}(\bar{z}^k) - H_k}^2 + 2\norm{g'(x^k) - g'(\bar{x}^k)}^2 \\
&& \leq \tilde{\kappa}^2 + 2L_g^2\norm{x^k-\bar{x}^k}^2\nonumber.
\end{eqnarray}
Since $\tilde{\kappa}\gamma < \frac{1}{2}$, we can shrink $\mathcal{B}(\bar{z}^k,r_z)$ sufficiently small such that 
\begin{equation*}
\gamma\sqrt{\tilde{\kappa}^2 + 2L_g^2r^2_z} < \frac{1}{2}. 
\end{equation*}
If we define $\tilde{\kappa}_1 := \sqrt{\tilde{\kappa}^2 + 2L_g^2r^2_z} \geq 0$ then the last inequality and \eqref{eq:proof_thm51_est1}
imply
\begin{equation}\label{eq:proof_A1_est1}
\norm{F'(\bar{z}^k) - \hat{\tilde{F}}'_k} \leq \tilde{\kappa}_1, 
\end{equation}
where $\tilde{\kappa}_1\gamma < \frac{1}{2}$.
Similar to the proof of Lemma \ref{le:inverse_operator}, we can show that the mapping $\hat{J}_k := (\hat{\tilde{F}}'_k +
\mathcal{N}_K)^{-1}$ is single-valued and Lipschitz continuous with a Lipschitz constant $\tilde{\beta} :=
\gamma(1-\gamma\tilde{\kappa}_1)^{-1} > 0$ in $\mathcal{B}(\bar{z}^k, r_z)$. Consequently, the convex problem $\mathrm{P}(x^k, H_k;
\xi_{k+1})$ is uniquely solvable in $\mathcal{B}(\bar{z}^k, r_z)$ for all $\xi_{k+1}\in\mathcal{B}(\xi_k, r_{\xi})$, which proves b).
 
With the same argument as the proof of Theorem \ref{th:contraction_estimate}, we can also prove the following estimate
\begin{equation*}
\norm{z^{k+1}-\bar{z}^{k}} \leq \left(\tilde{\alpha}_k + \tilde{c}_1\norm{z^k-\bar{z}^k}\right)\norm{z^k-\bar{z}^k} +
\left(\tilde{c}_2 + \tilde{c}_3\norm{\xi_{k+1}-\xi_k}\right)\norm{\xi_{k+1}-\xi_k}, 
\end{equation*}
where $\tilde{\alpha} := \gamma\tilde{\kappa}_1(1 - \gamma\tilde{\kappa}_1)^{-1} \in [0, 1)$, $\tilde{c}_1 :=
\gamma\omega(2-2\gamma\tilde{\kappa}_1)^{-1} \geq 0$, $\tilde{c}_2 := \gamma\tilde{\kappa}_1\bar{\sigma}(1-1\gamma\tilde{\kappa}_1)^{-1}
> 0$ and $\tilde{c}_3 := \gamma\omega\bar{\sigma}^2(2-2\gamma\tilde{\kappa}_1)^{-1} \geq 0$.
The remaining statements of Theorem \ref{th:exact_jacobian_estimate} are proved similarly to the proofs of Theorem
\ref{th:contraction_estimate} and Corollary \ref{co:tracking_corollary}.
\end{proof}
%+ End of the proof.

% %+4.1.How to approximate matrix A_k.
\paragraph{Remark on updating matrices $A_k$ and $H_k$}
In the adjoint-based predictor-corrector SCP algorithm, an approximate matrix $A_k$ of $g'(x^k)$ and a vector $m^k = (g'(x^k) -
A_k)^Ty^k$ are required at each iteration such that they maintain Assumption \aref{as:A3}. 
Suppose that the initial approximation $A_0$ is known. For given $z^k$ and $A_k$, $k\geq 0$,  we need to compute $A_{k+1}$ and $m^{k+1}$ in an efficient way.
If $\norm{A_k- g'(\bar{x}^{k+1})}$ is still small then we can even use the same matrix $A_k$ for the next iteration, i.e.
$A_{k+1} = A_k$ due to Assumption \aref{as:A3} (see Section \ref{se:examples}).  
Otherwise, matrix $A_{k+1}$ can be constructed in different ways, e.g. by using low-rank updates or by a low accuracy computation.
As by an inexactness computation, we can either use the two sided rank-1 updates (TR1) \cite{Diehl2010, Griewank2002} or the Broyden formulas \cite{Schlenkrich2010}.
However, it is important to note that the use of the low-rank update for matrix $A_k$ might destroy possible sparsity structure of matrix $A_k$.
Then high-rank updates might be an option \cite{Bock1984,Griewank1982}.

In Algorithm \ref{alg:A1} we can set matrix $H_k = 0$ for all $k\geq 0$. However, this matrix can be updated at each iteration by using
BFGS-type formulas or the projection of $\nabla^2_x\mathcal{L}(z^k)$ onto $\mathcal{S}^n_{+}$.

%%%%%%%%%%%%%%%%%%%%%%%%%%%%%%%%%%%%%%%%%%%%%%%%%%%%%%%%%%%%%%%%%%%%%%%%%%%%%%%
%+ 5. Adjoint-based SCP algorithm for nonlinear optimization
\section{Applications in nonlinear programming}\label{sec:SCP_case}
If the set of parameters $\Sigma$ collapses to one point, i.e. $\Sigma := \{\xi\}$ then, without loss of generality, we assume that $\xi=0$
and problem \ref{eq:param_prob} is reduced to a nonlinear programming problem of the form: 
\begin{equation}\label{eq:nlp_prob}
\left\{\begin{array}{cl}
\displaystyle\min_{x\in\R^n} & f(x) := c^Tx \\
\textrm{s.t.}  & g(x) = 0,\\
& x\in\Omega,
\end{array}\right.\tag{$\mathrm{P}$}
\end{equation}
where $c$, $g$ and $\Omega$ are as in \ref{eq:param_prob}. 
In this section we develop local optimization algorithms for solving \eqref{eq:nlp_prob}. 

The KKT condition for problem \eqref{eq:nlp_prob} is expressed as:
\begin{equation}\label{eq:nlp_KKT}
\begin{cases}0 \in c + g'(x)^Ty + \mathcal{N}_{\Omega}(x),\\ 0 = g(x),\end{cases} 
\end{equation}
where $\mathcal{N}_{\Omega}(x)$ defined by \eqref{eq:P_normal_cone}. 
A pair $\hat{z}:=(\hat{x}^T,\hat{y}^T)^T$ satisfying \eqref{eq:nlp_KKT} is called a KKT point, $\hat{x}$ is called a stationary point
and $\hat{y}$ is the corresponding multiplier of \eqref{eq:nlp_prob}, respectively. We denote by $\hat{Z}^{*}$ the set of the KKT points and
by $\hat{S}^{*}$ the set of stationary points of \eqref{eq:nlp_prob}.  

Now, with the mapping $F$ defined as \eqref{eq:P_varphi}, the KKT condition \eqref{eq:nlp_KKT} can be reformulated as a generalized
equation:
\begin{equation}\label{eq:GE}
0 \in F(z) + \mathcal{N}_K(z), 
\end{equation}
where $K = \Omega\times \R^m$ as before and $\mathcal{N}_K(z)$ is the normal cone of $K$ at $z$.  

The subproblem \ref{eq:convex_subprob} in Algorithm \ref{alg:A1} is reduced to 
\begin{equation}\label{eq:convex_subprob0}
\makeatletter
\def\tagform@#1{\maketag@@@{#1\@@italiccorr}}
\makeatother
\left\{\begin{array}{cl}
\displaystyle\min_{x\in\R^n} & c^Tx + (m^j)^T(x-x^j) + \frac{1}{2}(x-x^j)^TH_j(x-x^j)\\
\textrm{s.t.}  & g(x^j) + A_j(x-x^j) = 0,\\
& x\in\Omega.
\end{array}\right.\tag{$\mathrm{P}(z^j, A_j, H_j)$}
\end{equation}
Here, we use the index $j$ in the algorithms for the nonparametric problems (see below) to distinguish from the index $k$ in the parametric cases.

In order to adapt to the theory in the previous sections, we only consider the full-step algorithm for solving \eqref{eq:nlp_prob} which is
called \textit{full-step adjoint-based sequential convex programming} is described as follows.

%%%%%%%%%%%%%%%%%%%%%%%%%%%%%%%%%%%%%%%%%%%%%%%%%%%%%%%%%%%%%%%%%%%%%%%%%%%%%%%%%%%%%%%%%%%
\noindent\rule[1pt]{\textwidth}{1.0pt}{~~}
\begin{algorithm}\label{alg:A2} 
$\mathrm{(}$\textit{Full-step adjoint-based SCP algorithm}~$\mathrm{(FASCP))}$
\end{algorithm}
\vskip -0.15cm
\noindent\rule[1pt]{\textwidth}{0.5pt}
\vskip -0.15cm
\begin{romannum}
\item[\textbf{Initialization.}] Find an initial guess $x^0\in\Omega$ and $y^0\in\R^m$, a matrix $A_0$ approximated to $g'(x^0)$ and
$H_0\in\mathcal{S}^n_{+}$.  
Set $m^0 := (g'(x^0)-A_0)^Ty^0$ and $j:=0$.
\item[\textbf{Iteration $j$.}] For a given $(z^j, A_j, H_j)$, perform the following steps:
\begin{remunerate}
\item[\textit{Step 1.}] Solve the convex subproblem \ref{eq:convex_subprob0} to obtain a solution
$x^{j+1}_t$ and the corresponding multiplier ${y}^{j+1}$. 

\item[\textit{Step 2.}] If $\norm{x^{j+1}_t-x^j}\leq\varepsilon$, for a given tolerance $\varepsilon>0$, then: terminate. Otherwise, 
compute the search direction $\Delta x^j := x^{j+1}_t-x^j$.

\item[\textit{Step 3.}] Update $x^{j+1} := x^j + \Delta x^j$. 
Evaluate the function value $g(x^{j+1})$, update (or recompute) matrices $A_{j+1}$ and $H_{j+1}\in\mathcal{S}^n_{+}$ (if necessary) and the
correction vector $m^{j+1}$.
Increase $j$ by $1$ and go back to Step 1.
\end{remunerate}
\end{romannum}
\noindent\rule[1pt]{\textwidth}{1pt}
%%%%%%%%%%%%%%%%%%%%%%%%%%%%%%%%%%%%%%%%%%%%%%%%%%%%%%%%%%%%%%%%%%%%%%%%%%%%%%%%%%%%%%%%%%%
The following corollary shows that the \textit{full-step adjoint-based SCP algorithm} generates an iterative sequence that converges
linearly to a KKT point of \eqref{eq:nlp_prob}. 

%+ Corollary 5.1.
\begin{corollary}\label{co:SCP_case}
Let $\hat{Z}^{*}\neq\emptyset$ and $\hat{z}^{*}\in\hat{Z}^{*}$. Suppose that Assumption \aref{as:A1} holds and that problem
\eqref{eq:nlp_prob} is strongly regular at $\hat{z}^{*}$ (in the sense of Definition \ref{de:strong_regularity}). Suppose further that
Assumption \aref{as:A3}$\mathrm{a)}$ is satisfied in $\mathcal{B}(\hat{z}^{*},\hat{r}_z)$.
Then there exists a neighborhood $\mathcal{B}(\hat{z}^{*}, r_z)$ of $\hat{z}^{*}$ such that, in this neighborhood, the convex subproblem
$\mathrm{P}(x^j,A_j, H_j)$ has a unique KKT point $z^{j+1}$ for any $z^j\in\mathcal{B}(\hat{z}^{*}, r_z)$.
Moreover, if, in addition, Assumption \aref{as:A3}$\mathrm{b)}$ holds then the sequence $\{z^j\}_{j\geq 0}$ generated by Algorithm
\ref{alg:A2} starting from $z^0\in \mathcal{B}(\hat{z}^{*}, r_z)$
satisfies 
\begin{equation}\label{eq:adjSCP_est}
\norm{z^{j+1} - \hat{z}^{*}} \leq (\hat{\alpha} + \hat{c}_1\norm{z^j - \hat{z}^{*}})\norm{z^j - \hat{z}^{*}}, ~\forall j\geq 0, 
\end{equation}
where $0 \leq \hat{\alpha} < 1$ and $0 \leq \hat{c}_1  < +\infty$ are given constants. 
Consequently, this sequence converges linearly to $\hat{z}^{*}$, the unique KKT point of \eqref{eq:nlp_prob} in $\mathcal{B}(\hat{z}^{*},
r_z)$. 
\end{corollary}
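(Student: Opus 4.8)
The plan is to obtain this corollary as the degenerate, non-parametric instance of Theorem~\ref{th:contraction_estimate}, in which the parameter set is the single point $\{0\}$. First I would identify \eqref{eq:nlp_prob} with $\mathrm{P}(\xi_k)$ for the constant sequence $\xi_k \equiv 0$: then the generalized equation \eqref{eq:P_gen_eq} reduces to \eqref{eq:GE}, the convex subproblem \ref{eq:convex_subprob} reduces to \ref{eq:convex_subprob0}, and one fixed strongly regular KKT point $\hat{z}^{*} \in \hat{Z}^{*}$ serves as $\bar{z}^k$ \emph{for every} $k$; in particular $\bar{z}^{k+1} = \bar{z}^k = \hat{z}^{*}$ and $\norm{\xi_{k+1} - \xi_k} = 0$ throughout. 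I would also note that Step~3 of Algorithm~\ref{alg:A2} sets $x^{j+1} := x^j + \Delta x^j = x^{j+1}_t$, the solution of \ref{eq:convex_subprob0}, with the associated multiplier $y^{j+1}$, so the iterates $\{z^j = (x^j, y^j)\}$ are exactly those produced by Algorithm~\ref{alg:A1} with the parameter frozen at $0$. This reduction makes Assumption~\aref{as:A2} automatic from $\hat{Z}^{*} \neq \emptyset$, and leaves Assumptions~\aref{as:A1}, \aref{as:A3}$\mathrm{a)}$ and \aref{as:A3}$\mathrm{b)}$ as the hypotheses already in force.

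Next I would invoke Lemma~\ref{le:inverse_operator} with $\xi_k = \xi_{k+1} = 0$: it yields a radius $r_z$ such that, for any $z^j \in \mathcal{B}(\hat{z}^{*}, r_z)$, the map $J_j := (\tilde{F}'_j + \mathcal{N}_K)^{-1}$ from \eqref{eq:inverse_operator} is single-valued and $\beta$-Lipschitz near $\bar{v}^j$ with $\beta := \gamma/(1-\gamma\kappa)$, hence the subproblem $\mathrm{P}(x^j, A_j, H_j)$ has the unique KKT point $z^{j+1} = J_j(\tilde{F}'_j z^j - F(z^j))$ (using \eqref{eq:zkplus1} with $C\xi_{k+1} = 0$). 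This establishes the uniqueness statement. Then I would repeat the chain of estimates from the proof of Theorem~\ref{th:contraction_estimate} almost verbatim, with $z^k = z^j$ and $\bar{z}^k = \bar{z}^{k+1} = \hat{z}^{*}$: the terms $\beta\big(\kappa + \tfrac{\omega}{2}\norm{\bar{z}^{k+1} - \bar{z}^k}\big)\norm{\bar{z}^{k+1} - \bar{z}^k}$ (equivalently the $c_2$- and $c_3$-terms obtained after applying \eqref{eq:lm31_estimate}) vanish because $\bar{z}^{k+1} = \bar{z}^k$, and what remains is precisely \eqref{eq:adjSCP_est} with $\hat{\alpha} := \beta\kappa = \gamma\kappa/(1-\gamma\kappa) \in [0,1)$ (by Assumption~\aref{as:A3}$\mathrm{a)}$, since $\gamma\kappa < 1/2$) and $\hat{c}_1 := \gamma\omega/\big(2(1-\gamma\kappa)\big) \geq 0$.

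For the final, linear-convergence claim I would argue as in Corollary~\ref{co:tracking_corollary}, but more simply since the target $\hat{z}^{*}$ no longer drifts. Shrinking $r_z$ if necessary so that $q := \hat{\alpha} + \hat{c}_1 r_z < 1$ (possible because $\hat{\alpha} < 1$), an easy induction shows that if $z^0 \in \mathcal{B}(\hat{z}^{*}, r_z)$ then $\norm{z^{j+1} - \hat{z}^{*}} \leq q\norm{z^j - \hat{z}^{*}} \leq q r_z < r_z$, so $z^{j+1}$ stays in the ball and $\norm{z^j - \hat{z}^{*}} \leq q^j \norm{z^0 - \hat{z}^{*}} \to 0$; thus $\{z^j\}$ converges $q$-linearly to $\hat{z}^{*}$. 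Uniqueness of $\hat{z}^{*}$ among KKT points of \eqref{eq:nlp_prob} in $\mathcal{B}(\hat{z}^{*}, r_z)$ then follows either from strong regularity of \eqref{eq:GE} or directly from \eqref{eq:adjSCP_est}: any other KKT point $z'$ in the ball would satisfy $\norm{z' - \hat{z}^{*}} \leq q\norm{z' - \hat{z}^{*}}$, forcing $z' = \hat{z}^{*}$. I do not expect a genuine obstacle in any of this; the only point needing a line of justification is that the radii from Theorem~\ref{th:contraction_estimate} and Lemma~\ref{le:inverse_operator} can be taken independent of $j$, which is immediate since the constants $\gamma$, $\bar{r}_z$, $\bar{r}_{\delta}$, $\kappa$, $\omega$ are global.
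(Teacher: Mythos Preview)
Your proposal is correct and follows exactly the paper's approach: the paper's proof simply says that \eqref{eq:adjSCP_est} follows directly from Theorem~\ref{th:contraction_estimate} by taking $\xi_k=0$ for all $k$, and that the remaining statement is a consequence of \eqref{eq:adjSCP_est}. You have spelled out the details (the identification of Algorithm~\ref{alg:A2} with Algorithm~\ref{alg:A1} at a frozen parameter, the vanishing of the $c_2$- and $c_3$-terms, the explicit constants $\hat{\alpha}=\gamma\kappa/(1-\gamma\kappa)$ and $\hat{c}_1=\gamma\omega/(2(1-\gamma\kappa))$, and the linear-convergence induction) that the paper leaves implicit, but there is no substantive difference.
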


%+ Proof of Corollary 5.1. 
\begin{proof}
The estimate \eqref{eq:adjSCP_est} follows directly from Theorem \ref{th:contraction_estimate} by taking $\xi_k=0$ for all $k$. The
remaining statement is a consequence of \eqref{eq:adjSCP_est}. 
\end{proof}

% 5.2. The general SCP case.
If $A_j = g'(x^j)$ then the convex subproblem \ref{eq:convex_subprob0} in Algorithm \ref{alg:A2} is reduced to:
\begin{equation}\label{eq:exact_convex_subprob0}
\makeatletter
\def\tagform@#1{\maketag@@@{#1\@@italiccorr}}
\makeatother
\left\{\begin{array}{cl}
\displaystyle
\min_{x\in\R^n} &\left\{ c^Tx + \frac{1}{2}(x-x^j)^TH_j(x-x^j)\right\}\\
\textrm{s.t.} & g(x^j) + g'(x^j)(x-x^j) = 0, \\
       &x \in \Omega.
\end{array}\right.\tag{$\mathrm{P}(x^j, H_j)$}
\end{equation}
The local convergence of the \textit{full-step SCP algorithm} considered in \cite{Quoc2010} follows from Theorem \ref{th:exact_jacobian_estimate} as
a consequence, which is restated in the following corollary. 

%+ Corollary 5.2.
\begin{corollary}\label{co:exact_jacobian_SCP_estimate}
Suppose that Assumption \aref{as:A1} holds and  problem \eqref{eq:nlp_prob} is strongly regular at a KKT point $\hat{z}^{*}\in\hat{Z}^{*}$
(in the sense of Definition \ref{de:strong_regularity}).
Suppose further that Assumptions \aref{as:A3'} and \aref{as:A3}$\mathrm{b)}$ are satisfied.
Then there exists a neighborhood $\mathcal{B}(\hat{z}^{*}, r_z)$ of $\hat{z}^{*}$ such that the \textit{full-step SCP algorithm} starting
from $x^0$ with $(x^0,{y}^0) \in \mathcal{B}(\hat{z}^{*}, r_z)$ generates a sequence $\{z^j\}_{j\geq 0}$ satisfying:
\begin{equation*}\label{eq:exact_SCP_est}
\norm{z^{j+1} - \hat{z}^{*}} \leq (\breve{\alpha} + \breve{c}_1\norm{z^j - \hat{z}^{*}})\norm{z^j - \hat{z}^{*}},
\end{equation*}
where $\breve{\alpha} \in [0,1)$ and $\breve{c}_1 \in [0, +\infty)$ are constants and $z^{j+1}$ is a unique KKT point of the
subproblem $\mathrm{P}(x^j, H_j)$.  
As a consequence, the sequence $\{z^j\}$ converges linearly to $\hat{z}^{*}$, the unique KKT point of \eqref{eq:nlp_prob}
in $\mathcal{B}(\hat{z}^{*}, r_z)$. 
\end{corollary}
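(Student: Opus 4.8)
The plan is to obtain this corollary as the non-parametric specialization of Theorem~\ref{th:exact_jacobian_estimate}, in complete analogy with the way Corollary~\ref{co:SCP_case} was deduced from Theorem~\ref{th:contraction_estimate}. Since the parameter set is a singleton we may take $\xi_k \equiv 0$ for all $k$; then every problem $\mathrm{P}(\xi_k)$ coincides with \eqref{eq:nlp_prob}, the subproblem \ref{eq:exact_convex_subprob} becomes \ref{eq:exact_convex_subprob0}, the hypothesis $\hat{z}^{*}\in\hat{Z}^{*}$ plays the role of Assumption~\aref{as:A2}, and the fixed KKT point $\hat{z}^{*}$ plays the role of $\bar{z}^k$ for every $k$. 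In particular $\norm{\xi_{k+1}-\xi_k} = 0$ throughout.

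First I would invoke parts b) and c) of Theorem~\ref{th:exact_jacobian_estimate} for this constant parameter sequence. Part~b), which uses Assumption~\aref{as:A3'}, produces a neighborhood $\mathcal{B}(\hat{z}^{*}, r_z)$ such that for every $z^j\in\mathcal{B}(\hat{z}^{*}, r_z)$ the convex subproblem \ref{eq:exact_convex_subprob0} has a unique KKT point $z^{j+1}$ in that ball. Part~c), which additionally uses Assumption~\aref{as:A3}$\mathrm{b)}$, gives the estimate~\eqref{eq:contraction_estimate1}; because $\norm{\xi_{k+1}-\xi_k} = 0$, the terms $(\tilde{c}_2 + \tilde{c}_3\norm{\xi_{k+1}-\xi_k})\norm{\xi_{k+1}-\xi_k}$ vanish and \eqref{eq:contraction_estimate1} collapses to
\begin{equation*}
\norm{z^{j+1} - \hat{z}^{*}} \leq \left(\tilde{\alpha} + \tilde{c}_1\norm{z^j - \hat{z}^{*}}\right)\norm{z^j - \hat{z}^{*}},
\end{equation*}
with $\tilde{\alpha} = \gamma\tilde{\kappa}_1(1-\gamma\tilde{\kappa}_1)^{-1}\in[0,1)$ and $\tilde{c}_1 = \gamma\omega(2-2\gamma\tilde{\kappa}_1)^{-1}\in[0,+\infty)$ the constants built in the proof of Theorem~\ref{th:exact_jacobian_estimate}. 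Putting $\breve{\alpha} := \tilde{\alpha}$ and $\breve{c}_1 := \tilde{c}_1$ yields the displayed inequality of the corollary.

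It then remains to deduce linear convergence and invariance of the ball. I would shrink $r_z$, if necessary, so that $r_z < \tilde{\bar{r}}_z := \tilde{c}_1^{-1}(1-\tilde{\alpha})$, whence $q := \breve{\alpha} + \breve{c}_1 r_z < 1$. For $z^0\in\mathcal{B}(\hat{z}^{*}, r_z)$, a one-line induction on the displayed estimate gives $\norm{z^{j+1}-\hat{z}^{*}} \leq q\,\norm{z^j-\hat{z}^{*}} \leq r_z$ for all $j$, so every iterate stays in $\mathcal{B}(\hat{z}^{*}, r_z)$ and $\norm{z^j-\hat{z}^{*}} \leq q^j\norm{z^0-\hat{z}^{*}}\to 0$, i.e. $\{z^j\}$ converges $q$-linearly to $\hat{z}^{*}$ --- this is exactly the $\tilde{c}_3 = 0$, $\xi$-frozen case of Corollary~\ref{co:tracking_corollary}, for which the radius $\bar{\tilde{r}}_{\xi}$ is irrelevant. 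Uniqueness of $\hat{z}^{*}$ as the KKT point of \eqref{eq:nlp_prob} in $\mathcal{B}(\hat{z}^{*}, r_z)$ is already contained in Lemma~\ref{le:nonemptyness_of_Gamma} applied with $\xi_{k+1}=\xi_k$; alternatively, any KKT point in the ball is a fixed point of the contraction $z\mapsto\hat{J}_k(\hat{\tilde{F}}'_k z - F(z))$ arising in \eqref{eq:zkplus1}--\eqref{eq:sol_express}, hence equals $\hat{z}^{*}$.

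Because the whole argument is a transcription of Theorem~\ref{th:exact_jacobian_estimate} with the parameter variation switched off, I do not expect a genuine obstacle. The only point deserving care is the bookkeeping of constants: one must check that $r_z$ can be chosen small enough to \emph{simultaneously} guarantee unique solvability of the subproblems (part~b), validity of the Lipschitz estimate \eqref{eq:lm32_estimate} underlying part~c), and $q<1$. This is precisely the final shrinking step already carried out in the proofs of Theorem~\ref{th:exact_jacobian_estimate} and Corollary~\ref{co:tracking_corollary}, so no new work is required.
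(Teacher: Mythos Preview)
Your proposal is correct and matches the paper's approach: the paper does not write out a separate proof for this corollary but simply states that it ``follows from Theorem~\ref{th:exact_jacobian_estimate} as a consequence,'' exactly as Corollary~\ref{co:SCP_case} was obtained from Theorem~\ref{th:contraction_estimate} by setting $\xi_k\equiv 0$. You have correctly filled in those details, including the shrinking of $r_z$ to ensure $q<1$ and the induction for linear convergence.
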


% %%%%%%%%%%%%%%%%%%%%%%%%%%%%%%%%%%%%%%%%%%%%%%%%%%%%%%%%%%%%%%%%%%%%%%%%%%%%%%%
Finally, it is necessary to remark that if $\Omega$ is a polyhedral convex set in $\R^n$, i.e. $\Omega$ is the intersection of finitely many
closed half spaces of $\R^n$, then problem \eqref{eq:nlp_prob} also covers the standard nonlinear programming problem.
It was proved in \cite{Dontchev1996} that if $\Omega$ is polyhedral convex and the constraint qualification (LICQ) holds then the strong
regularity concept coincides with the strong second order sufficient condition (SSOSC) for \eqref{eq:nlp_prob}.
In this case, by an appropriate choice of $H_k$, the SCP algorithm collapses to the constrained Gauss-Newton method which has been widely used in numerical
solution of optimal
control problems, see, e.g. \cite{Bock1984}.

%%%%%%%%%%%%%%%%%%%%%%%%%%%%%%%%%%%%%%%%%%%%%%%%%%%%%%%%%%%%%%%%%%%%%%%%%%%%%%%
%+ 6. Numerical results.
\section{Numerical Results}\label{se:examples}
In this section we implement the algorithms proposed in the previous sections to solve the model predictive control problem of a hydro power plant. 

%+ 6.1. Dynamic model.
\subsection{Dynamic model} 
We consider a hydro power plant composed of several subsystems connected together. The system includes six dams with turbines $D_i$
($i=1,\dots, 6$) located along a river and three lakes $L_1, L_2$ and $L_3$ as visualized in Fig. \ref{fig:system}. $U_1$ is a
duct connecting lakes $L_1$ and $L2$. $T_1$ and $T_2$ are ducts equipped with turbines and $C_1$ and $C_2$ are ducts equipped with turbines
and pumps. The flows through the turbines and pumps are the controlled variables. The complete model with all the parameters can be found in
\cite{Savorgnan2010}. 
\begin{figure}[!h]\centering{{\tiny
\begin{tikzpicture}[scale=0.3,>=stealth]
\lake{L1}{(23,-3)}{$L_1$};
\lake{L2}{(16,-2)}{$L_2$};
\lake{L3}{(10.5,-9)}{$L_3$};
\coordinate (R1_in) at (28,-7);
\node[right] at (30.5,-6.3) {$q_\mathrm{in}$};
\arrow{30.5,-6.3}{28.5,-6.8};
\reach{R1}{R1_in}{4};
\node[below] at (R1_mb) {$R_1$};
\dam{D1}{R1_out}{$D_1$};
\reach{R2}{D1_out}{4};
\node[below] at (R2_mb) {$R_2$};
\dam{D2}{R2_out}{$D_2$};
\reach{R3}{D2_out}{4};
\node[above] at (R3_ma) {$R_3$};
\dam{D3}{R3_out}{$D_3$};
\reach{R4}{D3_out}{4};
\node[below] at (R4_mb) {$R_4$};
\dam{D4}{R4_out}{$D_4$};
\reach{R5}{D4_out}{4};
\node[below] at (R5_mb) {$R_5$};
\dam{D5}{R5_out}{$D_5$};
\reach{R6}{D5_out}{4};
\node[below] at (R6_mb) {$R_6$};
\dam{D6}{R6_out}{$D_6$};
\reach{R7}{D6_out}{2};
\dduct{R1_ma}{L1_br}{$C_1$};
\uduct{L1_b}{R2_ma}{$T_1$};
\dduct{R4_ma}{L3_br}{$C_2$};
\uduct{L3_bl}{R5_ma}{$T_2$};
\dduct{L1_bl}{L2_br}{$U_1$};
\node[right] at ($(R3_mb)+(4,-0.3)$) {$q_\mathrm{tributary}$};
\arrow{$(R3_mb)+(4,-0.3)$}{$(R3_mb)+(0,0.25)$}
\end{tikzpicture}}
\caption{Overview of the hydro power plant.}\label{fig:system}}
\end{figure}
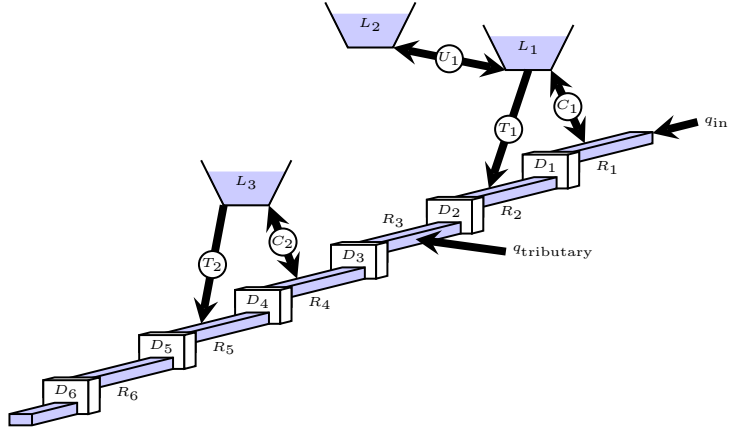

The dynamics of the lakes is given by
\begin{equation}\label{eq:lake_dynamic}
\frac{\partial{h}(t)}{\partial{t}} = \frac{q_{\mathrm{in}}(t) - q_{\mathrm{out}}(t)}{S},   
\end{equation}
where $h(t)$ is the water level and $S$ is the surface area of the lakes; $q_{\mathrm{in}}$ and $q_{\mathrm{out}}$ are the input and output
flows, respectively. The dynamics of the reaches $R_i$ ($i=1,\dots, 6$) is described by the one-dimensional Saint-Venant partial
differential equation:
\begin{equation}\label{eq:Saint_Venant}   
\begin{cases}
&\frac{\partial{q(t,y)}}{\partial{y}} + \frac{\partial{s(t,y)}}{\partial{t}} = 0,\\
&\frac{1}{g}\frac{\partial}{\partial{t}}\left(\frac{q(t,y)}{s(t,y)}\right)
+\frac{1}{2g}\frac{\partial}{\partial{y}}\left(\frac{q^2(t,y)}{s^2(t,y)}\right) +
\frac{\partial{h(t,y)}}{\partial{y}} + I_f(t,y) - I_0(y) = 0. 
\end{cases}
\end{equation}
Here, $y$ is the spatial variable along the flow direction of the river, $q$ is the river flow (or discharge), $s$ is the wetted surface,
$h$ is the water level with respect to the river bed, $g$ is the gravitation acceleration, $I_f$ is the friction slope and $I_0$ is the
river bed slope. The partial differential equation \eqref{eq:Saint_Venant} can be discretized by applying the method of lines in order
to obtain a system of ordinary differential equations. Stacking all the equations together, we represent the dynamics of the system by
\begin{equation}\label{eq:dynamics}
\dot{w}(t) = f(w, u), 
\end{equation}
where the state vector $w\in R^{n_w}$ includes all the flows and the water levels and $u\in R^{n_u}$ represents the input vector.
The dynamic system consists of $n_w = 259$ states and $n_u = 10$ controls. The control inputs are the flows going in the turbines, the ducts and the reaches.

%+ 6.2. Nonlinear MPC of formulation.
\subsection{Nonlinear MPC formulation}
We are interested in the following NMPC setting:
\begin{equation}\label{eq:NMPC}
\begin{array}{cl}
\displaystyle\min_{w, u} &J(w(\cdot), u(\cdot))\\
\textrm{s.t.} & \dot{w} = f(w, u), ~ w(t) = w_0(t),\\
&u(\tau) \in U, ~ w(\tau) \in W, ~\tau \in [t, t+T]\\
& w(t+T) \in R_T,   
\end{array}
\end{equation}
where the objective function $J(w(\cdot), u(\cdot))$ is given by
\begin{eqnarray}\label{eq:obj_MPC}
J(w(\cdot), u(\cdot)) &:= \int_t^{t+T}\left[(w(\tau) - w_s)^TP(w(\tau) - w_s) +
(u(\tau) - u_s)^TQ(u(\tau) - u_s) \right]d\tau \nonumber\\
[-1.5ex]\\[-1.5ex]
& + (w(t+T) - w_s)^TS(x(t+T) - w_s). \nonumber 
\end{eqnarray}
Here $P, Q$ and $S$ are given symmetric positive definite weighting matrices, and $(w_s, u_s)$ is a steady state of the
dynamics \eqref{eq:dynamics}.
The control variables are bounded by lower and upper bounds, while some state variables are also bounded and the others are unconstrained.
Consequently, $W$ and $U$ are boxes in $\mathbb{R}^{n_w}$ and $\mathbb{R}^{n_u}$, respectively, but $W$ is not necessarily bounded.
The terminal region $R_T$ is a control-invariant ellipsoidal set centered at $w_s$ of radius $r > 0$ and scaling matrix $S$, i.e.:
\begin{equation}\label{eq:terminal_region}
R_T := \left\{ w \in\mathbb{R}^{n_w} ~|~ (w - w_s)^TS(w - w_s) \leq r \right\}. 
\end{equation}
To compute matrix $S$ and the radius $r$ in \eqref{eq:terminal_region} the procedure proposed in \cite{Chen1998} can be used.
In \cite{Jadbabaie2005} it has been shown that the receding horizon control formulation \eqref{eq:NMPC} ensures the stability of the closed-loop system under
mild assumptions.
Therefore, the aim of this example is to track the steady state of the system and to ensure the stability of the system by satisfying the
terminal constraint along the moving horizon.
To have a more realistic simulation we added a disturbance to the input flow $q_{\mathrm{in}}$ at the beginning of the reach $R_1$ and the
tributary flow $q_{\mathrm{tributary}}$.

The matrices $P$ and $Q$ have been set to
\begin{align}\label{eq:PQ}
&P := \textrm{diag}\left(\frac{0.01}{(w_s)_i^2+1}~:~ 1\leq i \leq n_w\right), \nonumber\\
&Q := \textrm{diag}\left(\frac{4}{(u_l + u_b)_i^2 + 1}~:~ 1\leq i \leq n_u\right), \nonumber 
\end{align}
where $u_l$ and $u_b$ is the lower and upper bound of the control input $u$.

%+ 6.3. A short description of the multiple shooting method.
\subsection{A short description of the multiple shooting method}
We briefly describe the multiple shooting formulation \cite{Bock1984} which we use to discretize the continuous time problem
\eqref{eq:NMPC}. The time horizon $[t, t + T]$ of $T = 4$ hours is discretized into $H_p = 16$ shooting intervals with $\Delta\tau =
15$ minutes such that $\tau_0 = t$ and $\tau_{i+1} := \tau_i + \Delta\tau$ ($i=0,\dots, H_p-1$). The control $u(\cdot)$ is parametrized by
using a piecewise constant function $u(\tau) = u_i$ for $\tau_i \leq \tau \leq \tau_i+\Delta\tau$ ($i=0,\dots, H_p-1$).

Let us introduce $H_p+1$ shooting node variables $s_i$ ($i=0,\dots, H_p$).
Then, by integrating the dynamic system $\dot{w} = f(w, u)$ in each interval $[\tau_i, \tau_i+\Delta\tau]$, the continuous dynamic
\eqref{eq:dynamics} is transformed into the nonlinear equality constraints of the form:
\begin{equation}\label{eq:eqcon}
g(x) + M\xi:= \begin{bmatrix}s_0 - \xi\\ w(s_0,u_0) - s_1\\ \dots\\ w(s_{H_p-1}, u_{H_p-1}) - s_{H_p}\end{bmatrix} = 0.  
\end{equation}
Here, vector $x$ combines all the controls and shooting node variables $u_i$ and $s_i$ as  $x = (s_0^T, u_0^T, \dots, s^T_{H_p-1},
u^T_{H_p-1}, s_{H_p}^T)^T$, $\xi$ is the initial state $w_0(t)$ which is considered as a parameter, and $w(u_i, w_i)$ is the result of the
integration of the dynamics from $\tau_i$ to $\tau_i+\Delta\tau$ where we set $u(\tau) = u_i$ and $w(\tau_i) = s_i$.

The objective function \eqref{eq:obj_MPC} is approximated by 
\begin{eqnarray}\label{eq:obj_func}
f(x) && := \sum_{i=0}^{H_p-1}\left[(s_i - w_s)^TP(s_i - w_s) + (u_i - u_s)^TQ(u_i - u_s) \right] \nonumber\\
[-1.5ex]\\[-1.5ex]
&& + (s_{H_p} - w_s)^TS(s_{H_p} - w_s), \nonumber 
\end{eqnarray}
while the constraints are imposed only at $\tau =\tau_i$, the beginning of the intervals, as
\begin{equation}\label{eq:constr} 
s_i \in W, ~u_i \in U, ~ s_{H_p} \in R_T, (i=0, \dots, H_p-1).
\end{equation}
If we define $\Omega := U^{H_p}\times (W^{H_p} \times R_T)\subset \mathbb{R}^{n_x}$ then $\Omega$ is convex. Moreover, the objective
function \eqref{eq:obj_func} is convex quadratic. Therefore, the resulting optimization problem is indeed of the form \ref{eq:param_prob}.
Note that $\Omega$ is not a box but a curved convex set due to $R_T$.  

The nonlinear program to be solved at every sampling time has $4563$ decision variables and $4403$ equality constraints, which are expensive to evaluate due to
the ODE integration.

%+ 6.4. Numerical simulation.
\subsection{Numerical simulation}
Before presenting the simulation results, we give some details on the implementation.
To evaluate the performance of the methods proposed in this paper we implemented the following algorithms:
\begin{itemize}
\item Full-NMPC -- the nonlinear program obtained by multiple shooting is solved at every sampling time to convergence by several SCP iterations.

\item PCSCP -- the implementation of Algorithm \ref{alg:A1} using the exact Jacobian matrix of $g$. 

\item APCSCP -- the implementation of Algorithm \ref{alg:A1} with approximated Jacobian of $g$. Matrix $A_k$ is fixed at $A_k = g'(x^0)$ for
all $k\geq 0$, where $x^0$ is approximately computed off-line by performing the SCP algorithm (Algorithm \ref{alg:A2}) to solve the
nonlinear programming \ref{eq:param_prob} with $\xi=\xi_0 = w_0(t)$.

\item RTGN -- the solution of the nonlinear program is approximated by solving a quadratic program obtained by linearizing the dynamics and
the terminal constraint $s_{H_p} \in R_T$. The exact Jacobian $g'(\cdot)$ of $g$ is used. This method can be referred to as a classical
real-time iteration \cite{Diehl2002b} based on the constrained Gauss-Newton method \cite{Bock1984,Deuflhard2004}.
\end{itemize}
To compute the set $R_T$ a mixed Matlab and C++ code has been used. The computed value of $r$ is $ 1.687836$, while the matrix $S$ is dense,
symmetric and positive definite.

The quadratic programs (QPs) and the quadratically constrained quadratic programming problems (QCQPs) arising in the algorithms
we implemented can be efficiently solved by means of interior point or other methods \cite{Boyd2004,Nesterov2004}. In our implementation, we
used the commercial solver \texttt{CPLEX} which can deal with both types of problems.

All the tests have been implemented in C++ running on a $16$ cores workstation with $2.7$GHz Intel\textregistered Xeron CPUs and $12$ GB of
RAM. We used \texttt{CasADi}, an open source C++ package \cite{Andersson2010} which implements automatic differentiation to calculate the
derivatives of the functions and offers an interface to \texttt{CVODES} from the \texttt{Sundials} package \cite{Serban2005} to integrate
the ordinary differential equations and compute the sensitivities. The integration has been parallelized using \texttt{openmp}. 

In the full-NMPC algorithm we perform at most $5$ SCP iterations for each time interval. We stopped the SCP algorithm when the relative
infinity-norm of the search direction as well as of the feasibility gap reached the tolerance $\varepsilon = 10^{-3}$.
To have a fair comparison of the different methods, the starting point $x^0$ of the PCA, APCA and RTGN algorithms has been set to the
solution of the first full-NMPC iteration.

The disturbance on the flows $q_{\mathrm{in}}$ and $q_{\mathrm{tributary}}$ are generated randomly and varying from $0$ to $30$ and $0$ to
$10$, respectively. 
All the simulations are perturbed at the same disturbance scenario.

We simulated the algorithms for $H_m = 30$ time intervals. The average time required by the four methods is summarized in Table
\ref{tb:timing}. 
\begin{table}[!ht]
\begin{center}
\caption{The average time of four methods}\label{tb:timing}
\vskip -0.5cm
\begin{tabular}{lrrrrr}\\ \hline
\texttt{Methods} \!\!\!&\!\!\!  \texttt{AvEvalTime[s]} \!\!\!&\!\!\! \texttt{AvSolTime[s]} \!\!\!&\!\!\! \texttt{AvAdjDirTime[s]}
\!\!\!&\!\!\!  \texttt{Total[s]} \\ \hline
Full-NMPC \!\!\!&\!\!\! 220.930~(91.41\%) \!\!\!&\!\!\! 20.748~(8.58\%)  \!\!\!&\!\!\! -               \!\!\!&\!\!\! 241.700 \\  
PCSCP     \!\!\!&\!\!\! 70.370~(90.05\%)  \!\!\!&\!\!\! 7.736~(9.90\%)   \!\!\!&\!\!\! -               \!\!\!&\!\!\!  78.142 \\  
RTGN      \!\!\!&\!\!\! 70.588~(96.97\%)  \!\!\!&\!\!\! 2.171~(2.98\%)   \!\!\!&\!\!\! -               \!\!\!&\!\!\!  72.795 \\
APCSCP    \!\!\!&\!\!\! 0.458~( 3.28\%)   \!\!\!&\!\!\! 11.367~(81.34\%) \!\!\!&\!\!\! 2.122~(15.18\%) \!\!\!&\!\!\!  13.975 \\
\hline  
\end{tabular}
\end{center}
\end{table}
Here, \texttt{AvEvalTime} is the average time in seconds needed to evaluate the function $g$ and its Jacobian;
\texttt{AvSolTime} is the average time for solving the QP or QCQP problems;
\texttt{AvAdjTime} is the average time for evaluating the adjoint direction $g'(x^k)^T{y}^k$ in Algorithm \ref{alg:A1}; 
\texttt{Total} corresponds to the sum of the previous terms and some preparation time. On average, the full-NMPC algorithm needed $3.27$ iterations to converge to a solution.

It can be seen from Table \ref{tb:timing} that evaluating the function and its Jacobian matrix costs $90\%-97\%$ of the total time. On the
other hand, solving a QCQP problem is almost $3-5$ times more expensive than solving a QP problem.
The computationally expensive step at every iteration is the integration of the dynamics and its linearization.
The computational time of PCSCP and RTGN is almost similar, while the time consumed in APCSCP is about $6$ times less than PCSCP. 

The closed-loop control profiles of the simulation are illustrated in Figures \ref{fig:u_1_4} and \ref{fig:u_5_10}. 
Here, the first figure shows the flows in the turbines and the ducts of lakes $L_1$ and $L_2$,
while the second one plots the flows to be controlled in the reaches $R_i$ ($i=1,\dots, 6$).
\begin{figure}[ht]
\centerline{\includegraphics[angle=0,width=10.0cm]{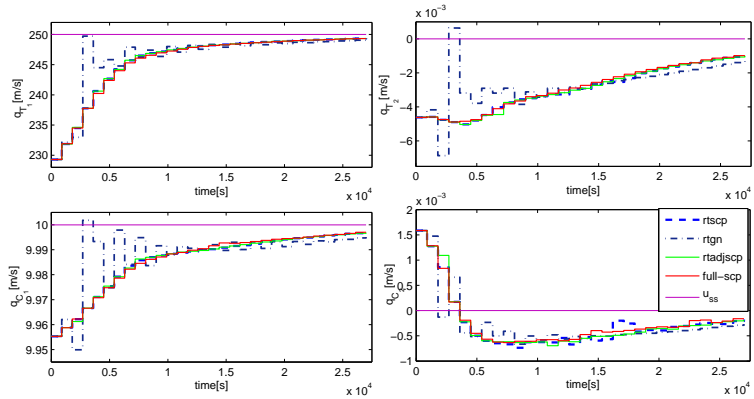}}
\caption{The controller profiles $q_{T_1}$, $q_{C_1}$, $q_{T_2}$ and $q_{C_1}$.}
\label{fig:u_1_4}
% \vskip -0.4cm
\end{figure}
\begin{figure}[ht]
\centerline{\includegraphics[angle=0,width=10.0cm]{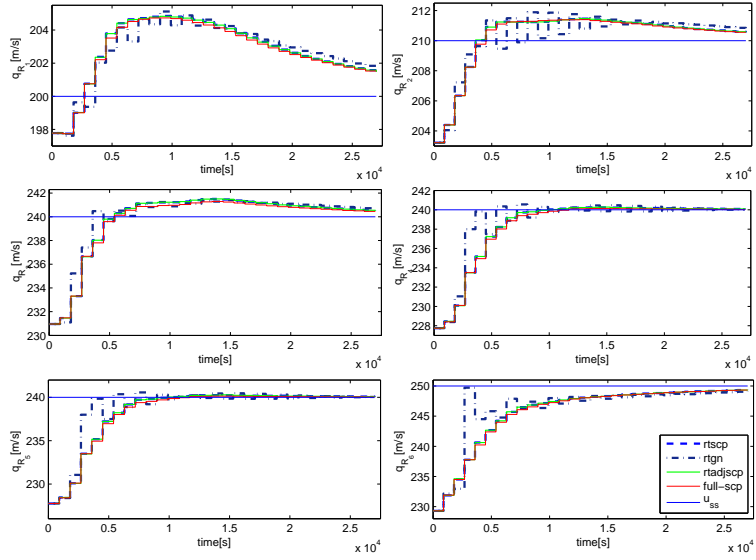}}
\caption{The controller profiles of $q_{R_1}, \dots, q_{R_6}$.}
\label{fig:u_5_10}
% \vskip -0.4cm
\end{figure}
We can observe that the control profiles achieved by PCSCP as well as APCSCP are close to the profiles obtained by Full-NMPC, while the
results from RTGN oscillate in the first intervals due to the violation of the terminal constraint. The terminal constraint in the PCSCP is active in many
iterations.

Figure \ref{fig:error} shows the relative tracking error of the solution of the nonlinear programming problem of the PCSCP, APCSCP and RTGN
algorithms when compared to the full-NMPC one.
\begin{figure}[ht]
\centerline{\includegraphics[angle=0,width=8.0cm]{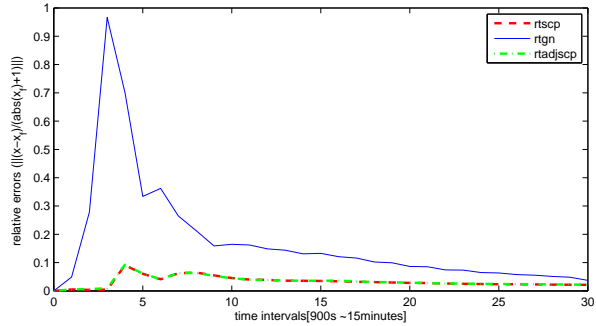}}
\caption{The relative errors of PCSCP, APCSCP and RTGN compared to Full-NMPC.}
\label{fig:error}
% \vskip -0.4cm
\end{figure}
The error is quite small in PCSCP and APCSCP while it is higher in the RTGN algorithm.
This happens because the linearization of the quadratic constraint can not adequately capture the shape of the terminal constraint $s_N\in
R_T$. The performance of APCSCP is nearly as good as PCSCP. This feature confirms the statement of Corollary \ref{co:tracking_corollary}.

%%%%%%%%%%%%%%%%%%%%%%%%%%%%%%%%%%%%%%%%%%%%%%%%%%%%%%%%%%%%%%%%%%%%%%%%%
%+ 7. Conclusions 
%%%%%%%%%%%%%%%%%%%%%%%%%%%%%%%%%%%%%%%%%%%%%%%%%%%%%%%%%%%%%%%%%%%%%%%%%
\section{Conclusions}\label{sec:conclusion}
We have proposed an \textit{adjoint-based predictor-corrector SCP algorithm} and its variants for solving parametric optimization problems
as well as nonlinear optimization problems. We proved the stability of the tracking error for the online SCP algorithms and the local
convergence of the SCP algorithms. 
These methods are suitable for nonconvex problems that possess convex substructures which can be efficiently handled by using convex optimization
techniques \cite{Quoc2011a}. The performance of the algorithms is validated by a numerical implementation of an application in nonlinear model predictive
control. 
The basic assumptions used in our development are the strong regularity, Assumption \aref{as:A3}$\mathrm{b)}$ and Assumption
\aref{as:A3}$\mathrm{a)}$ (or \aref{as:A3'}). The \textit{strong regularity} concept introduced by Robinson in \cite{Robinson1980} and is  widely used in
optimization and nonlinear analysis, Assumption \aref{as:A3}$\mathrm{b)}$ (or \aref{as:A3'}) is needed in any Newton-type
algorithm. As in SQP methods, these assumptions involve some Lipschitz constants that are
difficult to determine in practice.

Our future work is to develop a complete theory for this approach and apply it to new problems. 
For example, in some robust control problem formulations as well as robust optimization formulations, where we consider worst-case
performance within robust counterparts, a nonlinear programming problem with second order cone and semidefinite constraints needs to be
solved that can profit from the SCP framework.

%%%%%%%%%%%%%%%%%%%%%%%%%%%%%%%%%%%%%%%%%%%%%%%%%%%%%%%%%%%%%%%%%%%%%%%%%%%%%%%%
%+ 8. Acknowledgments.
\section*{Acknowledgments}
Research supported by Research Council KUL: CoE EF/05 /006 Optimization in Engineering(OPTEC), 
IOF-SCORES4CHEM, GOA/10/009\\(MaNet), GOA/10/11, several PhD/postdoc and fellow grants; 
Flemish Government: FWO: PhD/postdoc grants, projects G.0452.04, G.0499.04, G.0211.05, G.0226.06, G.0321.06, G.0302.07, G.0320.08, G.0558.08, G.0557.08,
G.0588.09,G.0377.09, research communities
(ICCoS, ANMMM, MLDM); 
IWT: PhD Grants, Belgian Federal Science Policy Office: IUAP P6/04; 
EU: ERNSI; FP7-HDMPC, FP7-EMBOCON, Contract Research: AMINAL. 
Other: Helmholtz-viCERP, COMET-ACCM, ERC-HIGHWIND, ITN-SADCO.

%+9. References
\bibliographystyle{plain}
% \bibliography{optec}

\end{document}